\setlist[itemize]{leftmargin=0.35in}
\newcommand{\QBinomial}[3]{\gkpSI{#1}{#2}_{#3}} 
\newcommand{\QPochhammer}[3]{\left(#1; #2\right)_{#3}} 
\newcommand{\gkpSI}[2]{\ensuremath{\genfrac{\lbrack}{\rbrack}{0pt}{}{#1}{#2}}} 
\newcommand{\gkpSII}[2]{\ensuremath{\genfrac{\lbrace}{\rbrace}{0pt}{}{#1}{#2}}} 
\newcommand{\Iverson}[1]{\ensuremath{\left[#1\right]_{\delta}}} 
\newcommand{\cf}[0]{\textit{cf}.\ } 
\newcommand{\citep}{\cite} 
\renewcommand{\emph}[1]{\textit{#1}}
\DeclareMathOperator{\ab}{ab}
\DeclareMathOperator{\Conv}{Conv} 
\DeclareMathOperator{\ConvP}{P}
\DeclareMathOperator{\ConvQ}{Q}
\DeclareMathOperator{\Log}{Log}
\theoremstyle{plain}
\newtheorem{theorem}{Theorem}[section]
\newtheorem{lemma}[theorem]{Lemma}
\newtheorem{claim}[theorem]{Claim}
\newtheorem{cor}[theorem]{Corollary}
\newtheorem{prop}[theorem]{Proposition}
\theoremstyle{definition}
\newtheorem{definition}[theorem]{Definition}
\theoremstyle{remark}
\newtheorem{remark}[theorem]{Remark}
\numberwithin{table}{section} 
\title[Continued Fractions and Lambert Series Generating Functions]{ 
     Continued Fractions and $q$-Series Generating Functions for the 
     Generalized Sum-of-Divisors Functions} 
\author[Maxie D. Schmidt]{Maxie D. Schmidt \\ \\ 
        School of Mathematics \\
        Georgia Institute of Technology \\ 
        117 Skiles Building \\
        686 Cherry Street NW \\ 
        Atlanta, GA 30332 \\ \\ 
        \href{mailto:maxieds@gmail.com}{maxieds@gmail.com}
        } 
\address{School of Mathematics \\ 
         Georgia Institute of Technology \\ 
         Atlanta, GA 30332
        } 
\email{maxieds@gmail.com}
\date{2017.05.04}
\keywords{divisor function; sum of divisors function; 
          continued fraction; J-fraction. } 
\subjclass[2010]{11J70; 11Y65; 40A30; 11B65; 11A25. } 
\begin{document}

\begin{abstract}
We construct new continued fraction expansions of Jacobi-type J-fractions in $z$ whose 
power series expansions generate the ratio of the $q$-Pochhamer symbols, 
$(a; q)_n / (b; q)_n$, for all integers $n \geq 0$ and where $a,b,q \in \mathbb{C}$ 
are non-zero and defined such that $|q| < 1$ and $|b/a| < |z| < 1$. 
If we set the parameters 
$(a, b) := (q, q^2)$ in these generalized series expansions, 
then we have a corresponding J-fraction enumerating the 
sequence of terms $(1-q) / (1-q^{n+1})$ over all integers $n \geq 0$. 
Thus we are able to define new $q$-series expansions which correspond to the 
Lambert series generating the divisor function, $d(n)$, when we set 
$z \mapsto q$ in our new J-fraction expansions. By repeated differentiation 
with respect to $z$, we also use these generating functions to formulate new 
$q$-series expansions of the generating functions for the sums-of-divisors 
functions, $\sigma_{\alpha}(n)$, when $\alpha \in \mathbb{Z}^{+}$. 
To expand the new $q$-series generating functions for these 
special arithmetic functions we define a generalized classes of so-termed 
Stirling-number-like ``$q$-coefficients'', or Stirling $q$-coefficients, 
whose properties, relations to 
elementary symmetric polynomials, and relations to the 
convergents to our infinite J-fractions are also explored within the 
results proved in the article. 
\end{abstract} 

\maketitle

\section{Introduction} 

\subsection{Continued fraction expansions of ordinary generating functions} 
\label{subSection_JFracExps_of_OGFs} 

\subsubsection*{Expansions of Jacobi-type J-fractions} 

\emph{Jacobi-type continued fractions}, or \emph{J-fractions}, 
correspond to power series defined by 
infinite continued fraction expansions of the form\footnote{ 
     \underline{Conventions}: 
     We adopt a hybrid of the notation for the implicit continued fraction 
     sequences $a_{h-1} b_h :\mapsto \ab_h$ from Flajolet's article 
     \citep{FLAJOLET80B}. Our usage of $P / Q$ to denote the 
     convergent function ratios is also consistent with the conventions from 
     this reference. 
} 
\begin{align} 
\label{eqn_J-Fraction_Expansions} 
J_{\infty}\left(z\right) 
     & = 
     \cfrac{1}{1-c_1z-\cfrac{\ab_2 z^2}{1-c_2z- 
     \cfrac{\ab_3 z^2}{\cdots}}} \\ 
\notag 
     & = 
     1 + c_1 z + \left(\ab_2+c_1^2\right) z^2 + 
     \left(2\ab_2 c_1+c_1^3+\ab_2 c_2\right) z^3 \\ 
     & \phantom{=1\ } + 
     \left(\ab_2^2+\ab_2\ab_3+3\ab_2 c_1^2 + c_1^4 + 2\ab_2 c_1c_2 + \ab_2 c_2^2 
     \right) z^4 + 
     \cdots, 
\end{align} 
for arbitrary, application-specific implicit sequences 
$\{ c_i \}_{i=1}^{\infty}$ and $\{ \ab_i \}_{i=2}^{\infty}$, and 
some typically formal series variable $z \in \mathbb{C}$ 
\citep[\cf \S 3.10]{NISTHB} \citep{WALL-CFRACS}. 
The formal series enumerated by special cases of the truncated and infinite 
J-fraction series of this form include typically divergent \emph{ordinary} 
(as opposed to typically closed-form \emph{exponential}) 
\emph{generating functions} 
for many one and two-index combinatorial sequences including the 
so-termed ``\emph{square series}'' functions studied in the references 
and in the results from Flajolet's articles 
\citep{FLAJOLET80B,FLAJOLET82,SQSERIES-CFRACS}. 

\subsubsection*{Generalized properties of the convergents to infinite J-fractions} 

We define the $h^{th}$ convergent functions, 
$\Conv_h(z) := \ConvP_h(z) / \ConvQ_h(z)$, to the infinite J-fraction in 
\eqref{eqn_J-Fraction_Expansions} recursively through the 
component numerator and denominator functions given by\footnote{ 
     \underline{Special Notation}: 
     \emph{Iverson's convention} compactly specifies 
     boolean-valued conditions and is equivalent to the 
     \emph{Kronecker delta function}, $\delta_{i,j}$, as 
     $\Iverson{n = k} \equiv \delta_{n,k}$. 
     Similarly, $\Iverson{\mathtt{cond = True}} \equiv 
     \delta_{\mathtt{cond}, \mathtt{True}} \in \{0, 1\}$, 
     which is $1$ if and only if \texttt{cond} is true, 
     in the remainder of the article. 
} 
\begin{align} 
\label{eqn_ConvFn_PhzQhz_rdefs} 
\ConvP_h(z) & = (1-c_{h} \cdot z) \ConvP_{h-1}(q, z) - 
     \ab_{h} \cdot z^2 \ConvP_{h-2}(q, z) + \Iverson{h = 1} \\ 
\notag 
\ConvQ_h(z) & = (1-c_{h} \cdot z) \ConvQ_{h-1}(q, z) - 
     \ab_{h} \cdot z^2 \ConvQ_{h-2}(q, z) + 
     (1-c_{1} \cdot z) \Iverson{h = 1} + \Iverson{h = 0}. 
\end{align} 
If we let $j_n := [z^n] J_{\infty}(z)$ in \eqref{eqn_J-Fraction_Expansions}, the 
convergents to the full J-fraction defined as above 
provide $2h$-order accurate truncated 
power series approximations to the infinite-order J-fraction generating functions
in the following form for each $h \geq 1$: 
\begin{align*}
\Conv_h(z) & = j_0 + j_1 z + j_2 z^2 + \cdots + j_{2h-1} z^{2h-1} + 
     \sum_{n \geq h} \bar{j}_{h,n} z^n. 
\end{align*} 
The rationality of $\Conv_h(z)$ for all $h \geq 1$ is a key property of these 
approximations to the infinite J-fraction expansion in 
\eqref{eqn_J-Fraction_Expansions} in that it allows us to prove finite difference 
equations of order less than or equal to $h$ for the coefficients generated by the 
power series expansions of these functions. We use these new $h$-order 
finite difference equations to prove our main theorem stated in the next 
subsection in Section \ref{Section_ProofsOfMainResults}. 
We now focus on the task of formulating specific cases of these generalized 
J-fraction expansions and convergent functions which lead to generating functions 
enumerating special number theoretic arithmetic functions we wish to study through 
these expansions. 

\subsection{Constructions of the J-fraction 
            generating function for the divisor function} 

\subsubsection*{Approach in the article} 

In this article we define the sequences implicit to the expansions of 
\eqref{eqn_J-Fraction_Expansions} to be functions of our primary series variable $q$ 
and then construct and prove new forms of convergent infinite J-fraction 
expansions whose power series expansions in $q$ after $z \mapsto q$ generate the 
\emph{divisor function}, $d(n)$, or $\sigma_0(n) = \sum_{d|n} 1$. We prove that this 
infinite J-fraction expansion corresponds to an infinite $q$-series expansion 
depending on $z$ and $q$ which may be differentiated termwise with respect to $z$ 
to obtain new modified $q$-series forms of generating functions for the 
generalized \emph{sums-of-divisors functions}, 
$\sigma_{\alpha}(n) = \sum_{d|n} d^{\alpha}$ for 
$\alpha \in \mathbb{Z}^{+}$, when we again set $z \mapsto q$ in these expansions. 
These multiplicative functions are of great interest in number theory with 
applications in many famous open problems and number theoretic 
conjectures. 
We briefly touch on the significance, applications, and relations to recent 
research in number theory of our new results in Section \ref{Section_Sig_and_Apps}. 

Within the scope of this article, we study the expansions of 
generalized divisor function 
generating functions in the context of a much more broadly applicable method for 
enumerating sequences of special functions based on 
generalized continued fractions. 
In particular, we find formulas $\ab_i(q)$ and $c_i(q)$ 
for the sequences, $\langle \ab_i \rangle$ and $\langle c_i \rangle$, in 
\eqref{eqn_J-Fraction_Expansions} such that that the infinite J-fraction, 
denoted by $J_{\infty}(q, z)$ when the component sequences in this case 
are clear from context, generates the terms 
$[z^n] J_{\infty}(z) \equiv [z^n] J_{\infty}(q, z) = \frac{1-q}{1-q^{n+1}}$ 
for all $n \geq 0$. 
We then see from this construction that the power series expansions of this 
scaled J-fraction in $q$ when $z \mapsto q$ generates the divisor function, 
$\sigma_0(n) \equiv d(n)$ as 
\begin{align*} 
\sum_{n \geq 1} \frac{q^n}{1-q^n} & = \sum_{m \geq 1} d(m) q^m,\ |q| < 1. 
\end{align*} 
Moreover, this procedure allows us to use the 
properties of more general continued fraction results, of which J-fractions are 
a special case, to formulate an infinite $q$-series in $q$ and $z$ for this 
generating function whose $h^{th}$ partial sum is $2h$-order accurate in 
enumerating the terms of $(1-q^{n+1})^{-1}$ over $z$. 

\subsubsection*{Complexity and structure of the new $q$-series 
                generating function expansions} 

We also notice that by the nature and complexity of the special arithmetic 
functions that we are able to generate with these constructions proved in the 
article, a priori these sums do (and should) correspond to complicated and 
rather involved combinatorial objects. 
We first demonstrate by a triple of examples 
the tricky nature inherent to directly expanding the symbolic sequences in the 
definition of \eqref{eqn_J-Fraction_Expansions} whose \emph{Lambert series}
generating functions in $q$ when $z \mapsto q$ generate the special functions, 
$d(n)$, $\sigma_1(n)$, and $\sigma_{\alpha}(n)$. 
More precisely, for $[z^n] J_{\infty}(z) := 1 / (1-q^n)$, we see that 
\begin{align*} 
c_1(q) & = \frac{1}{1-q} \\ 
c_2(q) & = \frac{1+q+4q^2}{2(q^3-1)} \\ 
c_3(q) & = \frac{1+5q+14q^2+26q^3+34q^4+25q^5+9q^6}{ 
     2 (1+q+q^2) (1+2q+3q^2) (1+q+q^2+q^3+q^4)} \\ 
\ab_2(q) & = -\frac{2q}{(1-q)^2 (1+q)} \\ 
\ab_3(q) & = -\frac{(1-q) (1+2q+3q^2)}{4(1+q) (1+q^2) (1+q+q^2)}, 
\end{align*} 
and for $[z^n] J_{\infty}(z) := n / (1-q^n)$, we similarly compute that 
\begin{align*} 
c_1(q) & = \frac{1}{1-q} \\ 
c_2(q) & = \frac{q(-1-q+8q^2)}{(1-q)(1-3q)(1+q+q^2)} \\ 
c_3(q) & = -\frac{1-5 q^2-16 q^3-16 q^4+40 q^5+136 q^6+144 q^7+67 q^8+8 q^9+q^{10}}{ 
     (1-3 q) \left(1+q+q^2\right) \left(1+q+q^2+q^3+q^4\right) 
     \left(-1+4 q^2+8 q^3+q^4\right)} \\ 
\ab_2(q) & = \frac{1-3q}{(1-q)^2 (1+q)} \\ 
\ab_3(q) & = \frac{(1-q)^3 (-1+4q^2+8q^3+q^4)}{(1+q) (1-3q)^2 (1+q^2) (1+q+q^2)^2}. 
\end{align*} 
More generally, we can see that there is no apparent special formula for the 
expansions of the implicit sequences for 
$[z^n] J_{\infty}(z) := n^{\alpha} / (1-q^n)$ by computing that 
\begin{align*} 
c_1(q) & = \frac{1}{1-q} \\ 
c_2(q) & = \frac{3^{\alpha } (-1+q)^2 (1+q)+\left(1+2^{1+\alpha } (-1+q)+q\right) \left(1+q+q^2\right)}{\left(1+2^{\alpha } (-1+q)+q\right) \left(-1+q^3\right)} \\ 
c_3(q) & = \scriptstyle{\Biggl[\frac{2^{1+3 \alpha } 3^{\alpha }}{(-1+q)^3 (1+q)^2 \left(1+q^2\right) \left(1+q+q^2\right)}-\frac{27^{\alpha }}{\left(-1+q^3\right)^3}+\frac{5^{\alpha }}{(-1+q)^4 \left(1-q^5\right)}+\frac{20^{\alpha }}{\left(-1+q^2\right)^2 \left(1-q^5\right)}+\frac{\frac{2^{1+3 \alpha }}{(1+q)^2 \left(1+q^2\right)}+\frac{9^{\alpha }}{\left(1+q+q^2\right)^2}}{(-1+q)^5}} \\ 
     & \phantom{=\Biggl[\ } + 
     \scriptstyle{\frac{2^{\alpha } \left(-\frac{2\ 9^{\alpha } (-1+q) (1+q)^3}{\left(1+q+q^2\right)^2}+\frac{8^{\alpha } \left(-1+3 q^2\right)}{1+q^2}\right)}{(-1+q)^5 (1+q)^4}+\frac{2^{\alpha } \left(\frac{2^{\alpha } 3^{1+\alpha }}{\left(-1+q^2\right)^2 \left(1-q^3\right)}-\frac{2\ 5^{\alpha }}{1-q^2-q^5+q^7}+\frac{2^{1+\alpha } 3^{\alpha }}{1+q^3 \left(-1-q+q^4\right)}\right)}{(-1+q)^2}\Biggr]} \Biggl/ \\ 
     & \phantom{=\Biggl[\ } 
     \scriptstyle{\Biggl[\left(\frac{1}{(-1+q)^2}+\frac{2^{\alpha }}{-1+q^2}\right) \left(-\frac{8^{\alpha }}{\left(-1+q^2\right)^3}+\frac{9^{\alpha }}{\left(-1+q^3\right)^2}+\frac{4^{\alpha }}{(-1+q)^2 \left(1-q^4\right)}+\frac{2^{\alpha } \left(-\frac{2\ 3^{\alpha }}{(-1+q)^2 \left(1+q+q^2\right)}+\frac{4^{\alpha }}{-1+q^4}\right)}{1-q^2}\right)\Biggr]} \\ 
\ab_2(q) & = -\frac{1}{(-1+q)^2}+\frac{2^{\alpha }}{1-q^2} \\ 
\ab_3(q) & = \frac{\frac{8^{\alpha }}{\left(-1+q^2\right)^3}+\frac{8^{\alpha }}{\left(-1+q^2\right)^2 \left(1+q^2\right)}-\frac{2^{1+\alpha } 3^{\alpha }}{(-1+q)^3 (1+q) \left(1+q+q^2\right)}-\frac{9^{\alpha }}{\left(-1+q^3\right)^2}+\frac{4^{\alpha }}{(-1+q)^3 \left(1+q+q^2+q^3\right)}}{\left(\frac{1}{(-1+q)^2}+\frac{2^{\alpha }}{-1+q^2}\right)^2}. 
\end{align*} 
We can also compute numerical sequences directly generating the functions, 
$\sigma_{\alpha}(n)$, over $z$, i.e., in place of attempting to indirectly 
enumerate these special sequences by generating their corresponding 
Lambert series function expansions through \eqref{eqn_J-Fraction_Expansions}. 
However, the resulting sequence expansions in these cases are similarly 
complicated and unreliable. 

\subsubsection*{An intermediate approach} 

We observe that we may bypass the seemingly complex forms of the 
implicit J-fraction sequences characteristic of empirically computing the 
first terms of these sequences whose corresponding $q$-series expansion of 
\eqref{eqn_J-Fraction_Expansions} generates the 
divisor sum functions, $d(n)$ and $\sigma_{\alpha}(n)$ for 
$\alpha \in \mathbb{Z}^{+}$, using a trick and a special case. 
That is, by constructing a new class of these J-fraction expansions generating the 
ratios of the $q$-Pochhammer symbols, $(a; q)_n / (b; q)_n$, 
according to Definition \ref{def_ParamSeqDefs_and_Notations} below 
for some fixed 
non-zero $a, b \in \mathbb{C}$, we may generate the divisor function generating 
function terms defined above in the particular case where $(a, b) := (q, q^2)$. 

\begin{definition}[Sequence Definitions and Notation] 
\label{def_ParamSeqDefs_and_Notations} 
We define the two sequences implicitly defined by \eqref{eqn_J-Fraction_Expansions} 
in the next theorem for some fixed non-zero $a, b, q \in \mathbb{C}$ to be 
\begin{align} 
\label{eqn_seq_defs_ababq_cabq_thm_stmt} 
\ab_i(a, b; q) & := \frac{q^{2i-4} (1-bq^{i-3}) (1-a q^{i-2}) (a - b q^{i-2}) 
     (1-q^{i-1})}{(1-b q^{2i-5}) (1-b q^{2i-4})^2 (1-b q^{2i-3})},\ 
     \text{ for integers } i \geq 2 \\ 
\notag 
c_i(a, b; q) & := \begin{cases} 
     \frac{q^{i-2}\left(q+ab q^{2i-3} + a\left(1-q^{i-1}-q^i\right) 
     + b \left(-1-q+q^{i}\right)\right)}{ 
     (1-b q^{2i-4}) (1-b q^{2i-2})} & \text{ if } i \geq 2; \\ 
     \frac{a-1}{b-1} & \text{ if } i = 1; \\ 
     0 & \text{ otherwise, } 
     \end{cases}
\end{align} 
where the \emph{$h^{th}$ modulus} component products are given by 
\begin{align} 
\notag 
\lambda_h(a, b; q) & := \ab_2(a, b; q) \cdots \ab_{h}(a, b; q) \\ 
\label{eqn_lambda_abq_hth_modulus_products} 
     & \phantom{:} = 
     \frac{a q^{(h-1)^2} \left(b/q; q\right)_{h-1} \left(a; q\right)_{h-1} 
     \left(b/a; q\right)_{h-1} (q; q)_{h-1}}{\left(b/q; q^2\right)_{h-1} 
     \left(b; q^2\right)_{h-1}^2 \left(bq; q^2\right)_{h-1}}. 
\end{align} 
\textit{Notation.} 
We formally introduce the notation of $J_{\infty}(a, b; q, z)$ to denote the 
left-hand-side function in \eqref{eqn_J-Fraction_Expansions} and 
$\Conv_h(a, b; q, z)$ for its $h^{th}$ convergents to denote shorthands for the 
J-fraction expansions involving these sequences parametrized in the non-zero 
$a, b, q \in \mathbb{C}$. Where it is clear from context that the particular 
J-fraction defined in this form is defined in terms of definite functions of 
$q$, we drop the first two parameters in the previous notation for the 
generalized expansions to write $J_{\infty}(q, z)$ for the infinite J-fraction and 
$\Conv_h(q, z)$ for its convergents. Typically, we assume in these cases that 
$(a, b) := (q, q^2)$ when using this abbreviated notation. 
\end{definition} 

There is ``\emph{no free lunch}'' in that these comparatively simple 
sequence forms result in expansions of the convergent functions, denoted by 
$\Conv_h(a, b; q, z)$ in these cases, which inherit a more rich and complex 
structure involving paired products of the $\ab_i(a, b; q)$ 
functions interleaved with the 
expansions of a generalized set of \emph{Stirling q-coefficients} whose 
expansions are explicitly formed by \emph{elementary symmetric polynomials} over the 
sequence of $q$-functions, $c_i(a, b; q)$. 
The resulting convergent-based series for these generating functions is 
considered to have the form of a ``$q$-series expansion'' since it is 
intertwined with finite $q$-Pochhammer symbols and reciprocal paired sums of the 
Stirling number $q$-coefficients weighted by individual products of the 
functions, $\ab_i(a, b; q)$, which are parametrized in the choices of non-zero 
$a, b, q \in \mathbb{C}$. 
Choosing a construction of our new $q$-series results based on the trick to 
enumerate the special case of $[z^n] J_{\infty}(z) = (a; q)_n / (b; q)_n$ by these 
J-fractions, we do in fact obtain the desired simpler forms of the sequences, 
though again we see in turn in the next sections that the corresponding 
convergent denominator functions, $Q_h(a, b; q, z)$, in 
\eqref{eqn_ConvFn_PhzQhz_rdefs} satisfy the 
predicted more complicated and involved expansions which 
we define and prove in the next sections of the article. 

\subsubsection*{Definitions and statement of the main theorem} 

\begin{theorem}[J-fractions Generating a Special Ratio of $q$-Pochhammer Symbols] 
\label{theorem_MainGenabqz_J-FracThm} 
Let $a, b, q, z \in \mathbb{C}$ denote fixed non-zero parameters such that 
$|b/a| < |z| < 1$ and $|q| < 1$. 
We claim that for the special sequences defined in the notation of 
Definition \ref{def_ParamSeqDefs_and_Notations} that for all $h \geq 2$ we have 
\begin{align} 
\label{eqn_mainthm_stmt_part1} 
\Conv_h(a, b; q, z) & = \sum_{i=1}^h \frac{\lambda_i(a, b; q) 
     z^{2i-2}}{Q_{i-1}(a, b; q, z) Q_i(a, b; q, z)}, 
\end{align} 
where for each integer $h \geq 1$ and all $0 \leq n < 2h$ 
\begin{align} 
\label{eqn_Convh_coeffs_zn_eq_ratio}
[z^n] \Conv_h(a, b; q, z) & = \frac{(a; q)_n}{(b; q)_n}, 
\end{align} 
In our special case of interest where $(a, b) := (q, q^2)$ we have 
\begin{align} 
\label{eqn_mainthm_stmt_part3} 
J_{\infty}(q, z) & = \frac{q(1+q)}{1+q-z} + \sum_{i \geq 2} 
     \frac{q \cdot q^{(i-1)^2} \left(q; q\right)_{i-1}^4 z^{2i-2}}{ 
     \left(q; q^2\right)_{i-1} 
     \left(q^2; q^2\right)_{i-1}^2 \left(q^3; q^2\right)_{i-1} \times 
     Q_{i-1}(q, z) Q_i(q, z)} \\ 
\notag 
     & = (1-q) \times \sum_{n \geq 0} \frac{z^n}{1-q^{n+1}} z^n, 
\end{align} 
provided that this infinite J-fraction is convergent for $|q|, |qz| < 1$ and our 
choices of $a, b \neq 0$. 
\end{theorem}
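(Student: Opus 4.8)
The plan is to treat the three assertions of Theorem~\ref{theorem_MainGenabqz_J-FracThm} in turn, using the general formalism for convergents from \S\ref{subSection_JFracExps_of_OGFs}: equation~\eqref{eqn_mainthm_stmt_part1} is a formal identity valid for every J-fraction, \eqref{eqn_Convh_coeffs_zn_eq_ratio} is the arithmetic core, and \eqref{eqn_mainthm_stmt_part3} is the specialization $(a,b) := (q,q^2)$ together with a convergence claim. First I would prove \eqref{eqn_mainthm_stmt_part1}. Working from the recurrences in \eqref{eqn_ConvFn_PhzQhz_rdefs} (and the initial data encoded by their Iverson-bracket terms), I would establish by induction on $h$ the ``determinant'' relation
\begin{align*}
\ConvP_h(a,b;q,z)\,\ConvQ_{h-1}(a,b;q,z) - \ConvP_{h-1}(a,b;q,z)\,\ConvQ_h(a,b;q,z) = \lambda_h(a,b;q)\, z^{2h-2},\qquad (h \geq 1).
\end{align*}
Substituting the two three-term recurrences collapses the left-hand side to $\ab_h(a,b;q)\,z^2$ times the corresponding $(h-1)$-st quantity, which is exactly the telescoping $\lambda_h = \ab_h\,\lambda_{h-1}$. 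Dividing by $\ConvQ_{h-1}\ConvQ_h$ and summing $\Conv_h = \Conv_1 + \sum_{i=2}^h(\Conv_i - \Conv_{i-1})$ then yields \eqref{eqn_mainthm_stmt_part1}; in parallel I would check the closed product form of $\lambda_h(a,b;q)$ in \eqref{eqn_lambda_abq_hth_modulus_products} by telescoping the formula for $\ab_i(a,b;q)$ in \eqref{eqn_seq_defs_ababq_cabq_thm_stmt}, which is routine $q$-Pochhammer bookkeeping.

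The substantive part is \eqref{eqn_Convh_coeffs_zn_eq_ratio}. Since $\Conv_h = \ConvP_h/\ConvQ_h$ with $\ConvQ_h(a,b;q,0) \neq 0$, it suffices to show, for the series $F(z) := \sum_{n\geq 0}\frac{(a;q)_n}{(b;q)_n}z^n$, that $\ConvQ_h(a,b;q,z)\,F(z) - \ConvP_h(a,b;q,z) = O(z^{2h})$ for every $h \geq 1$; dividing by $\ConvQ_h$ then gives $[z^n]\Conv_h = [z^n]F = (a;q)_n/(b;q)_n$ for all $n < 2h$. I would prove the $O(z^{2h})$ estimate by induction on $h$ from \eqref{eqn_ConvFn_PhzQhz_rdefs}, the engine being a $q$-contiguous relation for $F$. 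Concretely, I would introduce the ``tail'' series $F^{(k)}(z)$ obtained from $F$ by the parameter increments that make the coefficients line up with $c_k(a,b;q)$ and $\ab_{k+1}(a,b;q)$ (so that $F^{(1)} = F$), and verify the identity
\begin{align*}
F^{(k)}(z)\Bigl(1 - c_k(a,b;q)\,z - \ab_{k+1}(a,b;q)\,z^2\,F^{(k+1)}(z)\Bigr) = 1 .
\end{align*}
Comparing coefficients of $z^n$ reduces this to a three-term recurrence among the ratios $(aq^{k-1};q)_n/(bq^{2k-2};q)_n$, which is checkable from the elementary identities $(x;q)_n = (1-x)(xq;q)_{n-1} = (x;q)_{n-1}(1-xq^{n-1})$. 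Iterating the displayed relation formally exhibits $F$ as the value of the infinite J-fraction $J_\infty(a,b;q,z)$, and feeding this back through \eqref{eqn_mainthm_stmt_part1} gives \eqref{eqn_Convh_coeffs_zn_eq_ratio}.

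As a consistency check, when $b = q$ the series $F$ is $\,{}_2\phi_1(a,q;q;q,z)$ and the continued fraction above is Heine's classical $q$-continued fraction for it; applying the standard even-part contraction to that ${}_2\phi_1$ C-fraction recovers precisely the $\ab_i,c_i$ of Definition~\ref{def_ParamSeqDefs_and_Notations} (the order-$\leq h$ finite difference equations flagged in \S\ref{subSection_JFracExps_of_OGFs}, coming from the rationality of $\Conv_h$, give an alternative route). Finally, \eqref{eqn_mainthm_stmt_part3} follows by putting $(a,b) := (q,q^2)$: then $\frac{(q;q)_n}{(q^2;q)_n} = \frac{1-q}{1-q^{n+1}}$, the factors $(b/q;q)_{i-1},\,(a;q)_{i-1},\,(b/a;q)_{i-1}$ in \eqref{eqn_lambda_abq_hth_modulus_products} all collapse to $(q;q)_{i-1}$, so $\lambda_i(q,q^2;q)$ takes the stated form, and letting $h\to\infty$ under the hypothesis $|q|,|qz| < 1$ passes from the convergent sum in \eqref{eqn_mainthm_stmt_part1} to $J_\infty(q,z)$ and, at $z\mapsto q$, to the Lambert series generating $d(n)$. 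I expect the bottleneck to be the two intertwined guesses in the middle step: pinning down the exact parameter shift in the auxiliary family $F^{(k)}$, and then verifying the three-term identity for it. That verification is ``only'' a finite $q$-Pochhammer identity, but it is delicate because the numerator of $c_i(a,b;q)$ in \eqref{eqn_seq_defs_ababq_cabq_thm_stmt} is an opaque four-term polynomial in $q^i$ and the factor $(a - bq^{i-2})$ inside $\ab_i(a,b;q)$ must be produced exactly; organizing the coefficient comparison so that these pieces appear structurally rather than after a brute-force expansion is where the real work lies. A secondary technical point is the convergence of the infinite J-fraction in \eqref{eqn_mainthm_stmt_part3} needed to pass from the convergents to $J_\infty$, which I would handle with the standard bounds on $|\ab_i(a,b;q)\,z^2|$ for $|q|,|qz| < 1$.
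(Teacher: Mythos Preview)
Your proposal is correct but takes a genuinely different route from the paper for the core step \eqref{eqn_Convh_coeffs_zn_eq_ratio}. For \eqref{eqn_mainthm_stmt_part1} both you and the paper appeal to the standard determinant/telescoping identity for J-fraction convergents, so there is no difference there. For \eqref{eqn_Convh_coeffs_zn_eq_ratio}, however, the paper does \emph{not} introduce a shifted family $F^{(k)}$ and verify a $q$-contiguous relation. Instead it works entirely on the polynomial side: it proves (Proposition~\ref{prop_Phqz_poly_exp_in_z_lemma}) by a short double induction on $h$ and $n$, using only the recurrences \eqref{eqn_ConvFn_PhzQhz_rdefs}, that
\[
[z^n]\ConvP_h(q,z)=\sum_{i=0}^{n}[z^i]\ConvQ_h(q,z)\cdot\frac{1-q}{1-q^{\,n+1-i}},
\]
and then feeds this into the $h$-order finite difference equation coming from the rationality of $\Conv_h$ to conclude $[z^n]\Conv_h=(1-q)/(1-q^{n+1})$ for $n<h$. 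Note that the paper only carries this out for the special case $(a,b)=(q,q^2)$.

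What each approach buys: your Gauss--Heine style argument is the classical one, handles general $(a,b)$ uniformly, and makes transparent \emph{why} the particular $c_i,\ab_i$ of Definition~\ref{def_ParamSeqDefs_and_Notations} arise (they are forced by the contiguous relation), at the price of the delicate coefficient verification you flag. The paper's argument sidesteps that verification entirely---it never manipulates the explicit rational expressions for $c_i(a,b;q)$ or $\ab_i(a,b;q)$ in the proof of \eqref{eqn_Convh_coeffs_zn_eq_ratio}---but it is tied to the specific target sequence and is only written out for $(a,b)=(q,q^2)$. For the convergence in \eqref{eqn_mainthm_stmt_part3} the paper invokes Pringsheim's theorem and obtains the restricted range $0<|q|<0.206783$ (Proposition~\ref{prop_ConvOfInfJ-Fraction_ConvOfThm}), so your expectation of getting all of $|q|<1$ from ``standard bounds on $|\ab_i z^2|$'' is optimistic; be prepared for a smaller disc.
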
 

We will prove this theorem and the convergence of the limiting case of the finite 
convergent function sums for $(a, b) := (q, q^2)$ as two of our main results in 
Section \ref{Section_ProofsOfMainResults} below. 
We will first need some machinery for expanding the $h^{th}$ convergents to 
$J_{\infty}(z)$ for arbitrary sequences, $\langle \ab_i \rangle$ and 
$\langle c_i \rangle$, developed by the results proved in 
Section \ref{Section_ExpsOfGenConvFns}. 

\subsection{A comparison of J-fraction expansions for known generating functions 
            of the divisor and sum-of-divisors functions} 
\label{subSection_ExamplsOfKnownGFs}

The significance of Theorem \ref{theorem_MainGenabqz_J-FracThm} 
is that it allows us to formulate, as we will soon 
see, rich structured expansions of Lambert series generating functions for the 
divisor function, and by extension for $\sigma_{\alpha}(n)$ for all integers 
$\alpha \geq 1$. 
The reference \citep{SQSERIES-CFRACS} 
provides related convergent $q$-series expansions for the 
sum-of-divisors function, $\sigma_1(n)$, given through reciprocal sums over the 
\emph{Gaussian polynomials}, or \emph{$q$-binomial coefficients} in place of the 
sums we develop involving our so-termed generalized Stirling $q$-coefficients. 
These known special sequence generating functions are given by 
\begin{align*} 
\sum_{k=0}^{\infty} \sigma_1(2k+1) q^{2k+1} & = 
     \sum_{b = \pm 1} \frac{b}{16}\left( 
     1 + 2bq \times \sum_{i=1}^{\infty} \frac{(-1)^{i-1} (bq)^{3i(i-1)} 
     \QPochhammer{q^2}{q^2}{i-1}}{\sum\limits_{0 \leq j \leq n < 2i} 
     \QBinomial{i}{j}{q^2} \QBinomial{i-1}{n-j}{q^2} q^{2j} 
     (-b q^{2i-1})^n} \right)^{4}, 
\end{align*} 
and 
\begin{align*} 
\sum_{n \geq 1} \sigma_1(n) q^n & = -q \cdot d / dq \left[ 
      \Log\QPochhammer{q}{q}{\infty}\right], 
\end{align*} 
where for $|q| < 1$ we have that 
\begin{align*} 
\QPochhammer{q}{q}{\infty} & = 
     1 - q \times \sum_{i=1}^{\infty} 
     \frac{(-1)^{i-1} q^{(9i-2)(i-1)/2} \QPochhammer{q^3}{q^3}{i-1}}{ 
     \sum\limits_{0 \leq j \leq n < 2i} \QBinomial{i}{j}{q^3} 
     \QBinomial{i-1}{n-j}{q^3} q^{3j} \cdot q^{(3i-2) n}} \\ 
     & \phantom{=1\ } - 
     q^2 \times \sum_{i=1}^{\infty} 
     \frac{(-1)^{i-1} q^{(9i+2)(i-1)/2} \QPochhammer{q^3}{q^3}{i-1}}{ 
     \sum\limits_{0 \leq j \leq n < 2i} \QBinomial{i}{j}{q^3} 
     \QBinomial{i-1}{n-j}{q^3} q^{3j} \cdot q^{(3i-1) n}}. 
\end{align*} 
We note that an example related to our methods constructed within this 
article, but that we do not explicitly cite closed-form sums for, 
provides another $q$-series generating functions for the divisor function 
in the form of \citep{QSERIESIDENTS-DIVFNS} 
(\cf Table \ref{table_OtherqSeriesJFracExps} on page 
\pageref{table_OtherqSeriesJFracExps})
\begin{align*} 
\sum_{n=1}^{\infty} d(n) q^n & = \QPochhammer{q}{q}{\infty} \times 
     \sum_{n=1}^{\infty} \frac{n^{} q^n}{\QPochhammer{q}{q}{n}}, 
\end{align*} 
where we can generate the terms of 
$n^{} / \QPochhammer{q}{q}{n}$ through first-order derivatives of the 
series for $1 / \QPochhammer{z}{q}{\infty}$ also identified in the reference. 
Additional identities of Dilcher from his article 
provide expansions of the series coefficients of 
\[
U_{\alpha}(q) = 
     \sum_{n \geq 1} n^{\alpha} q^n \times \prod_{j=n+1}^{\infty} (1-q^j) = 
     \QPochhammer{q}{q}{\infty} \times 
     \sum_n \frac{n^{\alpha} q^n}{\QPochhammer{q}{q}{n}}, 
\]
when $\alpha \in \mathbb{Z}^{+}$. 
Unlike Dilcher's article referenced above, our methods provide a 
generalizable construction that is employed to form explicit $q$-series 
generating functions that enumerate the sequence of $\sigma_{\alpha}(n)$ for 
\emph{any} fixed integers $\alpha \geq 0$. 

\section{Expansions of generalized convergent functions to infinite J-fractions} 
\label{Section_ExpsOfGenConvFns} 

\subsection{Definitions and statements of key lemmas} 

\begin{definition}[Special Sums and Triangles of Stirling $q$-Coefficients] 
\label{def_SpSums_GenStirlingqCoeffs} 
Let the function, $S_{h,m,s}(z)$, be defined by the nested sums in the next 
equation for non-negative integers $h \geq 2$, $m \leq h$, and $s \leq mh$. 
\begin{align*} 
S_{h,m,s}(z) & := \sum_{k_1=2}^{h-2(m-1)} \sum_{k_2=k_1+2}^{h-2(m-2)} \cdots 
     \sum_{k_m=k_{m-1}+2}^{h} \left[ 
     \frac{\ab_{k_1}}{(1-c_{k_1}z)(1-c_{k_1-1}z)} \cdots 
     \frac{\ab_{k_m}}{(1-c_{k_m}z)(1-c_{k_m-1}z)}\right] \times \\ 
     & \phantom{:=\sum\sum\cdots\sum\sum\sum\qquad\ } \times 
     \Iverson{k_1+\cdots+k_m=s}
\end{align*} 
Next, we let the most general forms of the Stirling-number-like $q$-coefficient 
triangles be defined recursively by 
\begin{align} 
\label{eqn_GenStirqCoeffs_GenS1hkc} 
\gkpSI{h}{k}_c & := \gkpSI{h-1}{k}_c - c_h \gkpSI{h-1}{k-1}_c + \Iverson{h = k = 0}. 
\end{align} 
If we let the sequences implicit to the expansions of 
\eqref{eqn_J-Fraction_Expansions} correspond to the special $q$-Pochhammer ratio 
sequences defined in Definition \ref{def_ParamSeqDefs_and_Notations}, we define 
another auxiliary notation for these now so-termed Stirling $q$-coefficients 
where $\gkpSI{h}{k}_c \mapsto \gkpSI{h}{k}_{a,b,q}$ and these special case 
coefficients satisfy the corresponding triangular recurrence relation given by 
\begin{align} 
\label{eqn_GenStirqCoeffs_GenS1habq} 
\gkpSI{h}{k}_{a,b,q} & := \gkpSI{h-1}{k}_{a,b,q} - c_h(a,b; q) 
     \gkpSI{h-1}{k-1}_{a,b,q} + \Iverson{h = k = 0}. 
\end{align} 
\end{definition} 

\begin{lemma}[Products Generating the Stirling $q$-Coefficients]  
\label{lemma_ProdGFs_for_GenStirqCoeffs}
For all $h \geq 0$ and $0 \leq k \leq h$, we have that the most general forms of the 
Stirling $q$-coefficients defined above are generated by the products 
\begin{align*}
\gkpSI{h}{k}_c & = [z^k](1-c_1z)(1-c_2z) \cdots (1-c_hz) + \Iverson{h = k = 0}. 
\end{align*} 
\end{lemma}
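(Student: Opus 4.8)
The plan is to induct on $h$, using nothing more than the multiplicative telescoping of the polynomial on the right-hand side. Write $p_h(z) := (1-c_1 z)(1-c_2 z)\cdots(1-c_h z)$, so that $p_h(z) = (1-c_h z)\,p_{h-1}(z)$ for $h \geq 1$. Extracting $[z^k]$ from this factorization and using $[z^k]\bigl(z\,f(z)\bigr) = [z^{k-1}] f(z)$ produces $[z^k] p_h(z) = [z^k] p_{h-1}(z) - c_h\,[z^{k-1}] p_{h-1}(z)$, which is exactly the shape of the defining recurrence \eqref{eqn_GenStirqCoeffs_GenS1hkc} for $\gkpSI{h}{k}_c$ away from the corner $h = k = 0$. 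So the whole argument amounts to matching these two recurrences and reconciling the initial data.

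Concretely, I would phrase the induction on the slightly strengthened claim that $\gkpSI{h}{k}_c = [z^k] p_h(z) + \Iverson{h=k=0}$ holds for all integers $h, k \geq 0$ under the convention $\gkpSI{h}{k}_c = 0$ when $k < 0$ or $k > h$; since $\deg p_h = h$, both sides vanish for $k > h$ and the statement collapses to the advertised range $0 \leq k \leq h$. The base case $h = 0$ (so $k = 0$) is the one spot that needs a moment's care: the product $\prod_{i=1}^{0}(1-c_i z)$ is vacuous, and the matching $\Iverson{h=k=0}$ terms appearing in \eqref{eqn_GenStirqCoeffs_GenS1hkc} and in the claimed identity are exactly what fix the boundary value consistently; I would also quickly sanity-check $h = 1$, where $\gkpSI{1}{0}_c = 1$ and $\gkpSI{1}{1}_c = -c_1$ agree with $[z^0] p_1(z) = 1$ and $[z^1] p_1(z) = -c_1$. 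For the inductive step, fix $h \geq 1$, so that $\Iverson{h=k=0}$ is identically zero; then feeding the hypothesis for $h-1$ into \eqref{eqn_GenStirqCoeffs_GenS1hkc} gives
\begin{align*}
\gkpSI{h}{k}_c
  &= \gkpSI{h-1}{k}_c - c_h \gkpSI{h-1}{k-1}_c
   = [z^k] p_{h-1}(z) - c_h\,[z^{k-1}] p_{h-1}(z) \\
  &= [z^k]\bigl( (1-c_h z)\,p_{h-1}(z) \bigr)
   = [z^k] p_h(z),
\end{align*}
for every $k \geq 0$ (the endpoint $k = h$ is fine because $[z^h] p_{h-1}(z) = 0$, matching $\gkpSI{h-1}{h}_c = 0$), which closes the induction.

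I do not expect a real obstacle here: this is a routine ``coefficient extraction satisfies the recurrence'' argument. The only things demanding attention are the bookkeeping at the vacuous-product corner $h = k = 0$, and carrying the out-of-range convention $\gkpSI{h}{k}_c = 0$ for $k \notin \{0,1,\dots,h\}$ through the induction so that the single convolution step above is legitimate for all $k$ simultaneously. An equivalent alternative would be to expand $p_h(z) = \sum_k (-1)^k e_k(c_1,\dots,c_h) z^k$ and check that $(-1)^k e_k(c_1,\dots,c_h)$ obeys \eqref{eqn_GenStirqCoeffs_GenS1hkc} via the standard recurrence $e_k(c_1,\dots,c_h) = e_k(c_1,\dots,c_{h-1}) + c_h\,e_{k-1}(c_1,\dots,c_{h-1})$ for elementary symmetric polynomials, but the telescoping-product route is shorter and more transparent.
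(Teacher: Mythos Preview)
Your proposal is correct and takes essentially the same approach as the paper: both arguments extract coefficients from the factorization $p_h(z) = (1-c_h z)\,p_{h-1}(z)$ to recover the recurrence \eqref{eqn_GenStirqCoeffs_GenS1hkc}, then match initial conditions. The only cosmetic difference is that the paper names the right-hand side $f_{h,k}$ and verifies it satisfies the recurrence before invoking uniqueness, whereas you run a direct induction on the equality; the computational content is identical.
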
 

\begin{lemma}[Expansions of the Convergent Denominator Functions] 
\label{lemma_ExactExpsOfThe_ConvDenomFns} 
For the most general forms of the convergent denominator functions defined by 
\eqref{eqn_ConvFn_PhzQhz_rdefs} and the 
corresponding most general forms of the Stirling $q$-coefficients, 
$\gkpSI{h}{k}_c$, 
defined in this section above, we have the following expansion identities for 
all $0 \leq n \leq h$ when $h \geq 2$: 
\begin{align}
\tag{i} 
Q_h(z) & = (1-c_1z)\times\cdots\times(1-c_hz)\left[1 + 
     \sum_{m=1}^{\lfloor h/2 \rfloor} \sum_{s=0}^{mh} \left(-z^2\right)^m 
     S_{h,m,s}(z)\right] \\ 
\tag{ii} 
[z^n] Q_h(z) & = \gkpSI{h}{n}_c + 
     \sum_{m=1}^{\lfloor h/2 \rfloor} \sum_{s=0}^{mh} \sum_{k=0}^n 
     (-1)^m \gkpSI{h}{n-k}_c [z^{k-2m}] S_{h,m,s}(z). 
\end{align} 
\end{lemma}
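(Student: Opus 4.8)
The plan is to prove part (i) by induction on $h$ and then to obtain part (ii) from it by a single coefficient extraction. First I would record that the recurrences \eqref{eqn_ConvFn_PhzQhz_rdefs} for the denominator functions are equivalent, for all $h \geq 2$, to the homogeneous three-term recurrence $Q_h(z) = (1-c_h z) Q_{h-1}(z) - \ab_h z^2 Q_{h-2}(z)$ subject to the initial data $Q_0(z) = 1$ and $Q_1(z) = 1-c_1 z$. Read with the convention that an empty sum is $0$ and an empty product is $1$, the identity in part (i) holds trivially at $h = 0$ and $h = 1$, which serves as the base of the induction; one can also sanity-check $h = 2$ directly, since $S_{2,1,s}(z) = \ab_2 \Iverson{s = 2} / ((1-c_1 z)(1-c_2 z))$ reproduces $Q_2(z) = (1-c_1 z)(1-c_2 z) - \ab_2 z^2$.

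The combinatorial core of the induction is an auxiliary recurrence for the nested sums $S_{h,m,s}(z)$ of Definition \ref{def_SpSums_GenStirlingqCoeffs} themselves. Splitting the outermost summation variable $k_m$ into the cases $k_m \leq h-1$ and $k_m = h$, and observing that the spacing constraints $k_j \geq k_{j-1} + 2$ cause all the remaining upper limits to cascade so that each of the two pieces is genuinely a smaller $S$-sum with the correct index ranges, I would establish
\[
S_{h,m,s}(z) = S_{h-1,m,s}(z) + \frac{\ab_h}{(1-c_h z)(1-c_{h-1} z)}\, S_{h-2,m-1,s-h}(z) \qquad (m \geq 1),
\]
under the empty-product convention $S_{h,0,s}(z) = \Iverson{s = 0}$, together with the vanishing statement $S_{h,m,s}(z) = 0$ whenever $m > \lfloor h/2 \rfloor$ (there is no room for $m$ indices spaced at least two apart inside $\{2, \dots, h\}$).

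Substituting this $S$-recurrence into the candidate expansion for $Q_h(z)$, re-indexing $m \mapsto m - 1$ and shifting $s \mapsto s - h$ in the group of terms carrying the factor $\ab_h/((1-c_h z)(1-c_{h-1} z))$, and using the vanishing statements to realign the three upper limits $\lfloor h/2 \rfloor$, $\lfloor (h-1)/2 \rfloor$, $\lfloor (h-2)/2 \rfloor$, the bracketed correction factor collapses to precisely $Q_{h-1}(z) / \prod_{i=1}^{h-1}(1-c_i z) - \ab_h z^2\, Q_{h-2}(z)/\prod_{i=1}^{h-2}(1-c_i z)$ by the inductive hypothesis; multiplying through by $\prod_{i=1}^{h}(1-c_i z)$ recovers the three-term recurrence, completing the induction and proving part (i). Part (ii) then follows by extracting $[z^n]$ from the product in part (i): for $h \geq 2$ Lemma \ref{lemma_ProdGFs_for_GenStirqCoeffs} gives $[z^{n-k}](1-c_1 z)\cdots(1-c_h z) = \gkpSI{h}{n-k}_c$, while $[z^k]$ of the correction factor equals $\Iverson{k = 0}$ plus $\sum_{m=1}^{\lfloor h/2 \rfloor}\sum_{s=0}^{mh}(-1)^m [z^{k-2m}] S_{h,m,s}(z)$, and the Cauchy product over $0 \leq k \leq n$ yields the stated formula (the contributions with $k < 2m$ vanishing automatically).

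The step I expect to be the main obstacle is not any single computation but the bookkeeping in the inductive step: verifying that the split of $S_{h,m,s}(z)$ really does produce $S_{h-1,m,s}(z)$ and $S_{h-2,m-1,s-h}(z)$ with matching index ranges after the cascading of the upper limits, checking that the $m = 1$ case is correctly absorbed by the convention $S_{h,0,s}(z) = \Iverson{s = 0}$, and ensuring that the parity-sensitive identity $\lfloor h/2 \rfloor - 1 = \lfloor (h-2)/2 \rfloor$ lets the summation ranges for $m$ line up after re-indexing. Once these are pinned down, the remainder is routine manipulation of formal power series.
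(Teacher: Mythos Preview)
Your proposal is correct and follows essentially the same inductive strategy as the paper: both arguments split the nested sum according to whether the outermost index $k_m$ equals $h$ or is at most $h-1$, and then match the two resulting pieces against the three-term recurrence for $Q_h(z)$. Your presentation isolates this split as a clean standalone recurrence $S_{h,m,s}(z) = S_{h-1,m,s}(z) + \frac{\ab_h}{(1-c_h z)(1-c_{h-1} z)} S_{h-2,m-1,s-h}(z)$ before substituting, whereas the paper phrases the same computation in terms of the difference $T_h(z)$ and its $\ab_h$-carrying part $\widetilde{T}_h(z)$, but the content and the derivation of part (ii) by coefficient extraction via Lemma~\ref{lemma_ProdGFs_for_GenStirqCoeffs} are the same.
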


\begin{remark} 
The intuition for the statement of (i) in 
Lemma \ref{lemma_ExactExpsOfThe_ConvDenomFns}, which we prove by induction on $h$ 
in Section \ref{subSection_ProofsOfKeyLemmas}, is an interpretation of the 
successive expansions of the recurrence relation in \eqref{eqn_ConvFn_PhzQhz_rdefs} 
for $Q_h(z)$. Namely, we have on one hand terms of the form $(1-c_i z)$ multiplied 
recursively by the inductive expansions of $Q_h(z)$, and similarly 
on the other hand, we have 
terms of the form $z^2 \cdot \ab_i$ multiplied by inductive expansions of the 
expansion forms of the denominator functions with two indices of difference. 
This implies that we may pull out a factor of the product, 
$(1-c_1z) \cdots (1-c_hz)$, from the overall expansion of $Q_h(z)$, which leaves us 
with a sum of weighted terms in products of $z^{2m} \cdot \ab_{i_1} \cdots \ab_{i_m}$ 
times corresponding paired reciprocals of $(1-c_{i_j}z)(1-c_{i_j-1}z)$ in these 
expansions. The definition of the special sums, $S_{h,m,s}(z)$, from 
Definition \ref{def_SpSums_GenStirlingqCoeffs} makes the 
exact forms of these expansions for $Q_h(z)$ precise 
according to the statement of the lemma. 
\end{remark} 

\begin{remark}[Exact Formulas for the Generalized $q$-Coefficients] 
We see immediately, and can easily verify by induction, that the 
particular first columns of the triangular $q$-coefficients defined in 
Definition \ref{def_SpSums_GenStirlingqCoeffs} are given by the formulas 
\begin{align*} 
\gkpSI{h}{0}_{q,q^2,q} & = 1 \\ 
\gkpSI{h}{1}_{q,q^2,q} & = -\frac{1}{1+q} + \sum_{k=0}^{h-2} \Biggl[ 
     \frac{q}{2(1-q^{k+2})} - \frac{q^3+2q^2-3q-2}{2(q^2-1)(1+q^{k+2})} \\ 
     & \phantom{=-\frac{1}{1+q}+\sum\Biggl[\ } - 
     \frac{1}{2(1+q)(1-q^{k+1})} - \frac{2q-3}{2(1-q)(1+q^{k+1})}\Biggr], 
\end{align*} 
where the sums on the right-hand-side of the second equation are expanded through 
\emph{$q$-polygamma functions} for each $h \geq 1$ \cite[\S 5.18]{NISTHB}. 
Formalizing exact formulas for the subsequent cases of columns indexed by 
$k \geq 2$ is a dicey proposition by inspection alone. 
That being said, the products generating 
these coefficients given by Lemma \ref{lemma_ProdGFs_for_GenStirqCoeffs} do 
in fact give these coefficients another rich structure which inherits from the 
properties of elementary symmetric polynomials with respect to a single variable. 
For example, we have that 
\begin{align*} 
\gkpSI{h}{2}_c & = \sum_{1 \leq i_1 < i_2 \leq h} c_{i_1} c_{i_2} \\ 
\gkpSI{h}{3}_c & = \sum_{1 \leq i_1 < i_2 < i_3 \leq h} c_{i_1} c_{i_2} c_{i_3} \\ 
\gkpSI{h}{k}_c & = \sum_{1 \leq i_1 < \cdots < i_k \leq h} c_{i_1} \cdots c_{i_k} ,\ 
     \text{ for integers } k \geq 1, 
\end{align*} 
and in particular, if we define 
\begin{equation*} 
S_m(c_1, \ldots, c_h) := \sum_{j=1}^h c_j^m, 
\end{equation*} 
then the \emph{Newton-Girard formulas}, or \emph{Newton's identities}, 
imply that for $0 \leq k \leq h$ 
\begin{align*} 
(-1)^k k \gkpSI{h}{k}_c + \sum_{m=1}^k (-1)^{k-m} S_m(c_1, \ldots, c_h) 
     \gkpSI{h}{m-k}_c & = 0. 
\end{align*} 
For other specific properties of these generalized Stirling-like coefficients, 
we may consult the results in the references 
\citep{CHARALAM-GENQ-STIRLINGNUMS,MANSOUR-GENSTIRNUMS,FSTIRNUMS}. 
\end{remark} 

\subsection{Proofs of the key lemmas} 
\label{subSection_ProofsOfKeyLemmas}

\begin{proof}[Proof of Lemma \ref{lemma_ProdGFs_for_GenStirqCoeffs}] 
The proof of this result is follows by defining a recurrence for the 
right-hand-side products and showing that it generates precisely the same 
triangular sequence as the generalized Stirling numbers we defined in 
Definition \ref{def_SpSums_GenStirlingqCoeffs}. 
In particular, for integers $h, k \geq 0$ we define the 
following sequence in terms of the products in the statement of the lemma: 
\begin{equation*} 
f_{h,k} := [z^k] (1-c_1z) \cdots (1-c_hz) + \Iverson{h = k = 0}. 
\end{equation*} 
Then by expanding the right-hand-side coefficients of the last equation 
recursively we obtain that 
\begin{align*} 
f_{h,k} & = [z^k]\left((1-c_1z) \cdots (1-c_{h-1}z) - c_h \cdot z 
     (1-c_1z) \cdots (1-c_{h-1}z)\right) + \Iverson{h = k = 0} \\ 
     & = 
     [z^k] (1-c_1z) \cdots (1-c_{h-1}z) - c_h \cdot [z^{k-1}] 
     (1-c_1z) \cdots (1-c_{h-1}z) + \Iverson{h = k = 0} \\ 
     & = 
     f_{h-1,k} - c_h \cdot f_{h-1,k-1} + \Iverson{h = k = 0}. 
\end{align*} 
Thus we see that the product-wise coefficients, $f_{h,k}$, satisfy precisely the 
same recurrence relation as the generalized Stirling numbers defined above 
\emph{and} have the same initial conditions. Namely, that 
$f_{h,k}, \gkpSI{h}{k}_c = 0$ when $k, h < 0$ and 
$f_{h,k}, \gkpSI{h}{k}_c = 1$ when $h = k = 0$. 
Hence, we conclude that the two sequences define the same triangle, and so the 
generalized Stirling numbers (or \emph{Stirling $q$-coefficients}) 
are generated by the right-hand-side products in the statement of the lemma. 
\end{proof} 

\begin{proof}[Proof of Lemma \ref{lemma_ExactExpsOfThe_ConvDenomFns} (i)] 
We prove part (i) of the lemma by induction on $h$. When $h = 1$, we have that 
$Q_h(z) = 1-c_1z$ and that $\lfloor h/2 \rfloor = 0$ so that the double sum on the 
right-hand-side of (i) is zero, which implies that (i) is true in this case. 
Suppose that $h \geq 2$ and that (i) is true as a function of $k$ 
for all $k < h$. Then by expanding 
\eqref{eqn_ConvFn_PhzQhz_rdefs} according to our inductive hypothesis, we obtain 
\begin{align*} 
Q_h(z) & = (1-c_1z) \cdots (1-c_hz) \times \Biggl[1 + 
     \sum_{m=1}^{\left\lfloor \frac{h-1}{2} \right\rfloor} \sum_{s=0}^{m(h-1)} 
     (-1)^m z^{2m} S_{h-1,m,s}(z) \\ 
     & \phantom{\Biggl[\ } - 
     \frac{z^2 \ab_h}{(1-c_{h-1}z)(1-c_h z)} - 
     \frac{z^2 \ab_h}{(1-c_{h-1}z)(1-c_h z)} 
     \sum_{m=1}^{\left\lfloor \frac{h-2}{2} \right\rfloor} \sum_{s=0}^{m(h-2)} 
     (-1)^m z^{2m} S_{h-2,m,s}(z) \Biggr] 
\end{align*} 
We first suppose that $h := 2j+1$ is odd and will give an instructive proof of the 
formula in this case which similarly leads to a proof of the formula in the case 
where $h := 2j$ is even. 
We next define the auxiliary function, $T_h(z)$, and expand its terms as follows: 
\begin{align*} 
T_h(z) & = 
     \sum_{m=1}^{\lfloor h/2 \rfloor} \sum_{s=0}^{mh} 
     (-1)^m z^{2m} S_{h,m,s}(z) - 
     \sum_{m=1}^{\lfloor (h-1)/2 \rfloor} \sum_{s=0}^{mh} 
     (-1)^m z^{2m} S_{h-1,m,s}(z) \\ 
     & \phantom{:} = 
     \underset{:= \widetilde{T}_h(z) 
     \text{ contains all terms with a factor of } \ab_h }{\underbrace{ 
     \sum_{m=1}^{\lfloor h/2 \rfloor} \sum_{s=0}^{m(h-1)} 
     (-1)^m z^{2m} \left[S_{h,m,s}(z) - S_{h-1,m,s}(z)\right]}} + 
     \underset{= -\frac{z^2 \ab_h}{(1-c_{h-1}z)(1-c_hz)} 
     \text{ for $(m, s) = (1, h)$ }}{\underbrace{ 
     \sum_{m=1}^{\lfloor h/2 \rfloor} \sum_{s=0}^{m-1} 
     (-1)^m z^{2m} S_{h,m,s+mh-m+1}(z)}}. 
\end{align*} 
To simplify notation, let the shorthand 
$d_i := \ab_i / \left[(1-c_{i-1}z)(1-c_iz)\right]$. 
All terms with a factor of $\ab_h$ in the expansion of $\widetilde{T}_h(z)$ 
are of the form $d_{i_1} \times \cdots \times d_{i_{m-1}} \cdot d_h$ where by the 
definition of the special sums, $S_{h,m,s}(z)$, we must have that 
$i_{m-1} \leq h-2$ and $i_j \in [i_{j-1}+2, i_{j+1}-2]$ for $1 \leq j < m$ and 
where by convention we set $i_0 := 0$. Equivalently, we see that 
\begin{align*} 
\widetilde{T}_h(z) & = \sum_{k_1=2}^{h-2-2(m-1)} \cdots 
     \sum_{k_{m-1}=k_{m-2}+2}^{h-2} \sum_{k_m=h}^h d_{k_1} \cdots d_{k_m} \\ 
     & = 
     \sum_{m=2}^{\lfloor h/2 \rfloor} \sum_{s=0}^{m(h-2)} 
     d_h \times (-1)^m z^{2m} S_{h-2,m-1,s}(z) \\ 
     & = -\sum_{m=1}^{\left\lfloor \frac{h-2}{2} \right\rfloor} \sum_{s=0}^{m(h-2)} 
     z^2 d_h \times (-1)^mz^{2m} S_{h-2,m,s}(z), 
\end{align*} 
where the upper index $m(h-2)$ of the second sum is formed by considering the 
maximum possible sum of $k_1+\cdots+k_m=s$. 
Then we finally see that 
\begin{align*} 
\sum_{m=1}^{\lfloor h/2 \rfloor} \sum_{s=0}^{mh} (-1)^m z^{2m} S_{h,m,s}(z) & = 
     -z^2 d_h + 
     \sum_{m=1}^{\lfloor (h-1)/2 \rfloor} \sum_{s=0}^{m(h-1)} 
     (-1)^m z^{2m} S_{h-1,m,s}(z) \\ 
     & \phantom{=pz^2 d_h\ } - 
     z^2 d_h \times \sum_{m=1}^{\lfloor (h-2)/2 \rfloor} \sum_{s=0}^{m(h-2)} 
     (-1)^m z^{2m} S_{h-2,m,s}(z), 
\end{align*} 
and so the result is proved. 
\end{proof} 

\begin{proof}[Proof of Lemma \ref{lemma_ExactExpsOfThe_ConvDenomFns} (ii)] 
The result in (ii) of the lemma is not difficult to obtain from the statement of the 
first result in (i). In particular, we may first rewrite (i) in the following forms 
resulting from the expansions in Lemma \ref{lemma_ProdGFs_for_GenStirqCoeffs}: 
\begin{align*} 
Q_h(z) & = \sum_{k=0}^h \gkpSI{h}{k}_c z^k + \sum_{m=1}^{\lfloor h/2 \rfloor} 
     \sum_{s=0}^{mh} \sum_{s_0=0}^h (-1)^m z^{s_0} 
     [z^{s_0-k}]\left(\sum_{k=0}^h \gkpSI{h}{k}_c z^k\right) \times 
     [z^{k-2m}] S_{h,m,s}(z) \\ 
     & = 
     \sum_{k=0}^h \gkpSI{h}{k}_c z^k + \sum_{m=1}^{\lfloor h/2 \rfloor} 
     \sum_{s=0}^{mh} \sum_{s_0=0}^h \sum_{k=0}^{s_0} (-1)^m z^{s_0} 
     \gkpSI{h}{s_0-k}_c \times [z^{k-2m}] S_{h,m,s}(z). 
\end{align*} 
Hence we arrive at the coefficient formula in (ii) by removing the sum 
indexed by $s_0$ in the last equation and setting $s_0 \mapsto n$ when 
$0 \leq n \leq h$. 
\end{proof} 

\subsection{Related expansions of the convergent numerator functions} 

\subsubsection*{Roadmap for the proof of the theorem} 

To prove our main result stated in Theorem \ref{theorem_MainGenabqz_J-FracThm}, 
it suffices to provide an argument inductively showing that for each 
$0 \leq n < h$ we have that 
\begin{align} 
\label{eqn_DivFnGF_finite_diff_eqn_stmt_v1} 
\frac{1-q}{1-q^{n+1}} & = [z^n] P_h(z) - \sum_{i=1}^{\min(n, h)} 
     [z^i] Q_h(z) \cdot \frac{1-q}{1-q^{n+1-i}} \\ 
\notag 
     & = [z^n] P_h(z) - \sum_{i=1}^{\min(n, h)} 
     [z^{n+1-i}] Q_h(z) \cdot \frac{1-q}{1-q^{i}}. 
\end{align} 
In this case we are concerned not only with the expansions of the $h^{th}$ 
convergent functions defined in terms of the denominator functions by the 
theorem, but also with the polynomial 
expansions of the numerator functions, $P_h(z)$, in $z$. 
By the definitions of the two recurrence relations in 
\eqref{eqn_ConvFn_PhzQhz_rdefs}, which are identical except for 
their initial conditions, we readily see by a simple two line proof that 
\begin{align} 
\label{eqn_Phz_from_Qhz_seq_subst_def}
P_h(z) & = Q_{h-1}(z) \quad \text{ \it substituting } \quad 
     c_i \longmapsto c_{i+1},\ \ab_i \longmapsto \ab_{i+1}. 
\end{align} 
Surprisingly, despite how seemingly closely related the expansions of these two 
convergent function subsequences are, the knowledge of 
Lemma \ref{lemma_ProdGFs_for_GenStirqCoeffs} and 
Lemma \ref{lemma_ExactExpsOfThe_ConvDenomFns} alone 
does not provide us with immediate working relations that allow us to 
combine like terms to simplify the right-hand-sides in 
\eqref{eqn_DivFnGF_finite_diff_eqn_stmt_v1}. 
That is to say that we need to be slightly more inventive to 
arrive at the comparitively simple inductive proof of 
Theorem \ref{theorem_MainGenabqz_J-FracThm} given in the next section. 

\subsubsection*{Statements of properties of the convergent numerator functions} 

\begin{definition}[Numerator Function Stirling $q$-Coefficients and Special Sums] 
In analogy to the product-based expansions of the generalized Stirling numbers we 
proved in Lemma \ref{lemma_ProdGFs_for_GenStirqCoeffs} of the last subsection, 
we define the following equivalent modified forms of these coefficients to 
explore the properties of $P_h(z)$ in more structured detail: 
\begin{align*} 
\gkpSI{h}{k}_{c,P} & = \gkpSI{h-1}{k}_{c,P} - c_{h+1} \gkpSI{h-1}{k-1}_{c,P} \\ 
     & = [z^k] (1-c_2z) (1-c_3z) \cdots (1-c_{h} z). 
\end{align*} 
We also define the next shifted forms of the 
special nested sums corresponding to the expansions of the 
numerator functions, $P_h(z)$, proved in the lemma below for 
\begin{align*} 
S_{h,m,s}^{[P]}(z) & = \sum_{i_1=2}^{h-2m} \sum_{i_2=i_1+2}^{h-2(m-1)} \cdots 
     \sum_{k_m=k_{m-1}+2}^h \left(\prod_{p=1}^m 
     \frac{ab_{k_p+1}}{(1-c_{k_p}z) (1-c_{k_p+1}z)} \right) 
     \Iverson{k_1+\cdots+k_m=s-m}. 
\end{align*} 
\end{definition} 

We state the next claim and prove the next lemma each providing 
apparent first relations between the two 
sets of generalized Stirling $q$-coefficients and between the two distinct 
convergent function sequences. 
These results provide a more complicated approach to proving the result 
outlined in \eqref{eqn_DivFnGF_finite_diff_eqn_stmt_v1} 
above by attempting to relate the numerator and 
denominator convergent functions through the characteristic expansions of the 
denominator functions which we use to expand $\Conv_h(a, b; q, z)$ 
in the theorem statement. 

\begin{claim}
For all integers $1 \leq k \leq h$ we have the following relations between the two 
distinct index-shifted sets of generalized Stirling $q$-coefficients: 
\begin{align*} 
\gkpSI{h}{k}_{c,P} & = \gkpSI{h-1}{k}_c + (c_1-c_h) \times 
     \sum_{1 \leq i_1 < \cdots < i_{k-1} < h} c_{i_1+1} \cdots c_{i_{k-1}+1} \\ 
     & = \gkpSI{h-1}{k}_c + (c_1-c_h) \times 
     [z^{k-1}] (1-c_2z) \cdots (1-c_{h-1}z). 
\end{align*} 
Moreover, for non-negative integers $h \geq 0$, $m \leq h/2$, and $s \leq h$, 
we have the next conjecture for a formula relating the differences of the two 
variants of the special nested sums we have defined above. 
\begin{align*} 
S_{h-1,m,s}(z) - S_{h,m,s}^{[P]}(z) & = 
     \sum_{\substack{2 \leq i_1 < \cdots < i_m \leq h \\ i_1 + \cdots + i_m = s}} 
     \left(\prod_{k=1}^m \frac{\ab_{i_k}}{(1-c_{i_k-1}z) (1-c_{i_k}z)} \right). 
\end{align*} 
\end{claim}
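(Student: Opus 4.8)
The plan is to prove the two identities in turn, and in each case to reduce the statement to an elementary-symmetric-polynomial bookkeeping argument based on Lemma~\ref{lemma_ProdGFs_for_GenStirqCoeffs} and the substitution rule \eqref{eqn_Phz_from_Qhz_seq_subst_def}. For the first identity, I would start from the product representation $\gkpSI{h}{k}_{c,P} = [z^k](1-c_2z)(1-c_3z)\cdots(1-c_hz)$ and compare it with $\gkpSI{h-1}{k}_c = [z^k](1-c_1z)(1-c_2z)\cdots(1-c_{h-1}z)$. Both products have $h-1$ factors; the first uses the index window $\{2,\dots,h\}$ and the second uses $\{1,\dots,h-1\}$. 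The natural move is to write the telescoping relation
\begin{align*}
(1-c_2z)\cdots(1-c_hz) - (1-c_1z)(1-c_2z)\cdots(1-c_{h-1}z)
 & = \bigl((1-c_hz)-(1-c_1z)\bigr)\,(1-c_2z)\cdots(1-c_{h-1}z) \\
 & = (c_1-c_h)\,z\,(1-c_2z)\cdots(1-c_{h-1}z),
\end{align*}
and then extract the coefficient of $z^k$ from both sides: the right-hand side contributes $(c_1-c_h)\,[z^{k-1}](1-c_2z)\cdots(1-c_{h-1}z)$, which is exactly $(c_1-c_h)$ times the elementary symmetric polynomial $\sum_{1\le i_1<\cdots<i_{k-1}<h} c_{i_1+1}\cdots c_{i_{k-1}+1}$ after re-indexing. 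That proves the first claim with essentially no obstacle beyond keeping the index windows straight.

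For the second (conjectural) identity, I would argue combinatorially by comparing the index ranges appearing in the nested sums. Write out $S_{h-1,m,s}(z)$ from Definition~\ref{def_SpSums_GenStirlingqCoeffs} with $h\mapsto h-1$, so its summation variables $k_1<\cdots<k_m$ run over $2\le k_1$, $k_j\in[k_{j-1}+2,\,h-1-2(m-j)]$, with weight $\prod \ab_{k_p}/[(1-c_{k_p}z)(1-c_{k_p-1}z)]$ and Iverson factor $\Iverson{k_1+\cdots+k_m=s}$; and write out $S_{h,m,s}^{[P]}(z)$, whose variables satisfy $2\le k_1$, $k_j$ up to $h-2(m-j)$, with weight $\prod \ab_{k_p+1}/[(1-c_{k_p}z)(1-c_{k_p+1}z)]$ and Iverson factor $\Iverson{k_1+\cdots+k_m=s-m}$. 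The key reindexing is $j_p := k_p+1$ in the $S^{[P]}$ sum, which turns its weight into $\prod \ab_{j_p}/[(1-c_{j_p-1}z)(1-c_{j_p}z)]$ — the same shape as the summand on the right-hand side of the claim and as the $S_{h-1}$ summand — and turns its constraint into $j_1+\cdots+j_m = s$ with $3\le j_1<\cdots<j_m\le h$. Thus $S_{h,m,s}^{[P]}(z)$ equals the right-hand-side sum restricted to $j_1\ge 3$, while $S_{h-1,m,s}(z)$ should be shown to equal the right-hand-side sum restricted to $j_m\le h-1$ (after accounting for the staircase upper limits $h-1-2(m-j)$, which are exactly what force the spacing $j_{p+1}\ge j_p+2$ together with $j_m\le h-1$). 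Subtracting, the overlap cancels and one is left precisely with the terms having $j_1\ge 3$ removed from one side and $j_m=h$ removed from the other — which must be reconciled to yield the stated full sum over $2\le i_1<\cdots<i_m\le h$.

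The main obstacle is exactly this last reconciliation: verifying that the difference of the two restricted sums telescopes to the unrestricted sum over $2\le i_1<\cdots<i_m\le h$ requires carefully checking that the ``boundary'' index configurations ($i_1=2$ on one side, $i_m=h$ on the other) are not double-counted or dropped, and that the staircase bounds $h-1-2(m-j)$ versus $h-2(m-j)$ shift consistently under the reindexing. I expect this to hinge on an induction on $m$ (peeling off the outermost variable $k_m$ or $k_1$) or, more cleanly, on recognizing all three sums as coefficient extractions from a common generating product and reducing the identity to a single polynomial identity in $z$ analogous to the telescoping used for the first part. If the clean generating-function route works, the whole claim collapses to two short coefficient comparisons; if not, the $m$-induction will require spelling out the base case $m=1$ (where all three sums are single sums and the identity is transparent) and the inductive step with the spacing constraints handled explicitly.
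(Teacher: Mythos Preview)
The paper does not actually prove this Claim. It is stated without proof---the text introducing it says ``We state the next claim and prove the next lemma''---and the second identity is explicitly labeled a \emph{conjecture} in the statement itself. So there is no paper proof to compare your proposal against.

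Your telescoping argument for the first identity is correct and is the natural proof: factoring out the common block $(1-c_2z)\cdots(1-c_{h-1}z)$ and extracting $[z^k]$ gives the second displayed equality directly. (A minor caution: the first displayed form in the Claim, with the explicit sum $\sum c_{i_1+1}\cdots c_{i_{k-1}+1}$, appears to drop the sign $(-1)^{k-1}$ and has its index window ending at $h$ rather than $h-1$; these look like typos in the paper, and the product form you establish is the substantive version.)

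For the second identity, your reindexing $j_p=k_p+1$ is the right first move, but there is a structural mismatch you should flag before investing in an induction. Both $S_{h-1,m,s}(z)$ and (after your reindexing) $S_{h,m,s}^{[P]}(z)$ are sums over tuples with the gap-$2$ constraint $j_{p+1}\ge j_p+2$, whereas the right-hand side of the conjectured formula sums over tuples with only $i_1<\cdots<i_m$ (gap-$1$). A difference of two gap-$2$ sums over shifted windows cannot telescope to a gap-$1$ sum over the union window; already for $m=2$ the right-hand side contains terms with $i_2=i_1+1$ that neither $S$-sum can produce. So either the conjecture is intended with the gap-$2$ constraint on the right as well (in which case your window-difference argument goes through cleanly), or the statement as printed is not quite right. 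Since the paper itself offers this only as a conjecture, your hedged treatment is appropriate; just make the gap-$1$/gap-$2$ discrepancy explicit rather than hoping it dissolves in the induction.
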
 

\begin{lemma}[Exact Expansions of the Convergent Numerator Functions] 
For all $h \geq 2$ and $0 \leq n < h$, we have the next exact expansions of the 
convergent numerator functions, $P_h(z)$. 
\begin{align*} 
\tag{i} 
P_h(z) & = (1-c_2z)\cdots(1-c_hz)\left[1 + \sum_{m=1}^{\lfloor h/2 \rfloor} 
     \sum_{s=0}^{m(h+2)-2} \left(-z^2\right)^m S_{h-1,m,s}^{[P]}(z) \right] \\ 
\tag{ii} 
[z^n] P_h(z) & = \gkpSI{h}{n}_{c,P} + \sum_{m=1}^{\lfloor h/2 \rfloor} 
     \sum_{s=0}^{m(h+2)-2} \sum_{k=0}^h (-1)^m \gkpSI{h}{n-k}_{c,P} 
     [z^{k-2m}] S_{h-1,m,s}^{[P]}(z) 
\end{align*}
\end{lemma}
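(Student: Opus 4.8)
The plan is to prove the Lemma by essentially mirroring the proof strategy already used for Lemma \ref{lemma_ExactExpsOfThe_ConvDenomFns}, exploiting the key observation recorded in \eqref{eqn_Phz_from_Qhz_seq_subst_def}: namely that $P_h(z) = Q_{h-1}(z)$ after the index shift $c_i \mapsto c_{i+1}$, $\ab_i \mapsto \ab_{i+1}$. First I would prove part (i) directly by induction on $h$, paralleling the inductive argument in the proof of Lemma \ref{lemma_ExactExpsOfThe_ConvDenomFns} (i). The base cases $h = 2, 3$ are checked by hand against the recurrence in \eqref{eqn_ConvFn_PhzQhz_rdefs} with the initial condition $P_h(z) = (1-c_2 z)P_0(z) - \ab_2 z^2 P_{-1}(z) + \Iverson{h=1}$. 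For the inductive step with $h \geq 4$, I would expand $P_h(z) = (1-c_h z) P_{h-1}(z) - \ab_h z^2 P_{h-2}(z)$ using the inductive forms of $P_{h-1}(z)$ and $P_{h-2}(z)$, factor out the product $(1-c_2 z)\cdots(1-c_h z)$, and then show — via the same kind of auxiliary telescoping function $T_h(z)$ used in the denominator proof — that the leftover double sum assembles into $\sum_{m=1}^{\lfloor h/2\rfloor}\sum_{s=0}^{m(h+2)-2}(-z^2)^m S_{h-1,m,s}^{[P]}(z)$. The shifted index ranges in the definition of $S_{h,m,s}^{[P]}(z)$ (with the $\Iverson{k_1 + \cdots + k_m = s - m}$ constraint and the upper limit $m(h+2)-2$) are precisely engineered so that this reassembly works; the one subtlety is to verify that the boundary term picked up from the $-\ab_h z^2 P_{h-2}(z)$ piece corresponds exactly to the new $(m,s)$-block $m=1$, $s = h$ that gets absorbed.

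Part (ii) then follows from part (i) by the same mechanical step used to deduce Lemma \ref{lemma_ExactExpsOfThe_ConvDenomFns} (ii) from its part (i): I would rewrite the product $(1-c_2 z)\cdots(1-c_h z)$ via Lemma \ref{lemma_ProdGFs_for_GenStirqCoeffs} (in its shifted form, i.e.\ as the generating product for $\gkpSI{h}{k}_{c,P}$), extract the coefficient of $z^n$ from the resulting double product, convolve the two coefficient sequences, and reindex. Concretely, extracting $[z^n]$ from $\bigl(\sum_k \gkpSI{h}{k}_{c,P} z^k\bigr)\bigl(1 + \sum_{m,s}(-z^2)^m S_{h-1,m,s}^{[P]}(z)\bigr)$ and naming the inner summation variable $k$ gives exactly the stated formula, with the $\Iverson{h=k=0}$ correction term being irrelevant in the range $h \geq 2$, $0 \leq n < h$.

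I anticipate the main obstacle is not conceptual but bookkeeping: getting the four shifted index conventions in the definition of $S_{h,m,s}^{[P]}(z)$ to line up correctly through the induction. In particular, the definition as typeset mixes $i_p$ and $k_p$ names and has the summand indexed by $k_p+1$ subscripts while the Iverson constraint is $k_1 + \cdots + k_m = s - m$; I would first restate $S_{h,m,s}^{[P]}(z)$ cleanly (e.g.\ substituting $\ell_p = k_p + 1$ so the products read $\ab_{\ell_p}/[(1-c_{\ell_p - 1}z)(1-c_{\ell_p}z)]$ with constraint $\ell_1 + \cdots + \ell_m = s$), at which point the correspondence with the Claim's conjectured identity $S_{h-1,m,s}(z) - S_{h,m,s}^{[P]}(z)$ becomes transparent and the inductive telescoping becomes a direct transcription of the denominator case. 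The secondary subtlety is confirming that the relation \eqref{eqn_Phz_from_Qhz_seq_subst_def} does give an independent sanity check: applying $c_i \mapsto c_{i+1}$, $\ab_i \mapsto \ab_{i+1}$ to the statement of Lemma \ref{lemma_ExactExpsOfThe_ConvDenomFns} (i) for $Q_{h-1}(z)$ must reproduce part (i) here, and verifying the upper summation limit transforms $m(h-1) \mapsto m(h+2)-2$ correctly under this shift is the cleanest way to pin down the otherwise-opaque bound.
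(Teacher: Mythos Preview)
Your proposal is correct and rests on the same idea as the paper's proof: the identity \eqref{eqn_Phz_from_Qhz_seq_subst_def} reduces the numerator lemma to the already-proved denominator lemma, Lemma \ref{lemma_ExactExpsOfThe_ConvDenomFns}. The only difference is one of economy: the paper simply invokes \eqref{eqn_Phz_from_Qhz_seq_subst_def} and declares the proof identical to that of Lemma \ref{lemma_ExactExpsOfThe_ConvDenomFns}, whereas you propose to rerun the induction explicitly and use \eqref{eqn_Phz_from_Qhz_seq_subst_def} only as a sanity check---your version is more thorough about the index bookkeeping, but the underlying argument is the same.
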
 
\begin{proof}[Proof of (i) and (ii)]  
We cite a short proof that follows from the proofs of 
Lemma \ref{lemma_ExactExpsOfThe_ConvDenomFns} in the 
previous subsection by noticing that we have shown that 
\eqref{eqn_Phz_from_Qhz_seq_subst_def} results from the recurrence relations 
defining the two convergent function sequences in 
\eqref{eqn_ConvFn_PhzQhz_rdefs}. Thus we have already given an identical proof 
of these two related result in the previous subsection, and so we are done. 
\end{proof} 

\subsubsection*{The main expansion result for the numerator convergent functions} 

The next proposition provides a result that is relatively uncomplicated to 
prove by a nested, or \emph{double induction} procedure. In the next section we 
then combine this result with the $h$-order finite difference equations phrased in 
\eqref{eqn_DivFnGF_finite_diff_eqn_stmt_v1} implied by the 
rationality of $\Conv_h(q, z)$ for all $h \geq 1$ to complete our proof of 
Theorem \ref{theorem_MainGenabqz_J-FracThm}. 

\begin{prop}[Polynomial Expansions in $z$] 
\label{prop_Phqz_poly_exp_in_z_lemma} 
For each $h \geq 0$ and all $0 \leq n < h$, we have the following expansions 
of the polynomial coefficients of $P_h(q, z)$ in $z$: 
\begin{align*} 
[z^n] P_h(q, z) & = \sum_{i=0}^n [z^i] Q_h(q, z) \cdot \frac{1-q}{1-q^{n+1-i}}. 
\end{align*} 
\end{prop}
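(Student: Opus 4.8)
The plan is to prove Proposition~\ref{prop_Phqz_poly_exp_in_z_lemma} by a double induction: an outer induction on $h$ and, for fixed $h$, an inner induction on $n$ (equivalently, reading off successive coefficients $[z^n]$ of the two polynomial identities simultaneously). First I would record the base cases. For $h=0,1$ the claimed range $0 \le n < h$ is empty or forces $n=0$, and since $P_0(q,z)=0$ and $P_1(q,z)=1$ while $Q_0(q,z)=1$, $Q_1(q,z)=1-c_1(q)z$, the identity $[z^0]P_1 = [z^0]Q_1 \cdot \frac{1-q}{1-q} = 1$ holds after checking $c_1(q) = c_1(q,q^2;q) = \frac{q-1}{q^2-1} = \frac{1}{1+q}$ does not enter degree $0$. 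This pins down the normalization that makes the $(a,b)=(q,q^2)$ specialization work.

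Next I would set up the inductive step using the relation $P_h(q,z) = Q_{h-1}(q,z)$ under the shift $c_i \mapsto c_{i+1}$, $\ab_i \mapsto \ab_{i+1}$ established in \eqref{eqn_Phz_from_Qhz_seq_subst_def}, together with the defining recurrences \eqref{eqn_ConvFn_PhzQhz_rdefs}: namely $P_h(z) = (1 - c_h z) P_{h-1}(z) - \ab_h z^2 P_{h-2}(z)$ and the same recurrence for $Q_h$. Assuming the proposition holds for $h-1$ and $h-2$ (outer hypothesis), and that the degree-$<n$ coefficients of the degree-$h$ identity have been verified (inner hypothesis), I would extract $[z^n]$ from the $P_h$ recurrence:
\begin{align*}
[z^n]P_h(q,z) &= [z^n]P_{h-1}(q,z) - c_h \, [z^{n-1}]P_{h-1}(q,z) - \ab_h \, [z^{n-2}]P_{h-2}(q,z),
\end{align*}
substitute the induction hypotheses to rewrite each $[z^j]P_{h-k}$ as $\sum_i [z^i]Q_{h-k} \cdot \frac{1-q}{1-q^{j+1-i}}$, and then reorganize the resulting triple sum. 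The same recurrence expresses $\sum_i [z^i]Q_h \cdot \frac{1-q}{1-q^{n+1-i}}$ in terms of $[z^i]Q_{h-1}$, $[z^{i-1}]Q_{h-1}$, $[z^{i-2}]Q_{h-2}$. Matching the two expansions term by term reduces the whole proposition to a purely algebraic identity among the $q$-Pochhammer-type weights $\frac{1-q}{1-q^{m}}$ and the explicit closed forms for $c_h(q,q^2;q)$ and $\ab_h(q,q^2;q)$ from Definition~\ref{def_ParamSeqDefs_and_Notations}; this is where the specific values of $(a,b)=(q,q^2)$ are genuinely used, as the general $(a,b)$ weights $(a;q)_n/(b;q)_n$ do not satisfy as clean a three-term contiguous relation.

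The main obstacle I anticipate is precisely this bookkeeping collision: after substituting, one has a sum over $i$ of terms $[z^i]Q_{h-1}$ (and the shifted companions) multiplied by a $\mathbb{C}(q)$-valued coefficient built from $1$, $-c_h \frac{1-q^{n+1-i}}{1-q^{n-i}}$, $-\ab_h \frac{1-q^{n+1-i}}{1-q^{n-1-i}}$, and one needs to show this coefficient collapses to exactly the combination $1 - c_h \frac{(\text{shift})}{(\text{shift})} - \ab_h \frac{(\text{shift})}{(\text{shift})}$ produced by the $Q_h$ recurrence — in other words, an identity of rational functions in $q$ (parametrized by the index gap $n-i$) that should follow from a short direct computation with the formulas
\begin{align*}
c_i(q,q^2;q) &= \frac{q^{i-2}\bigl(q + q^{2i-1} + q(1 - q^{i-1} - q^i) + q^2(-1-q+q^i)\bigr)}{(1 - q^{2i-2})(1 - q^{2i})}, \\
\ab_i(q,q^2;q) &= \frac{q^{2i-4}(1 - q^{i-1})^2(1 - q^i)^2 (q - q^i)}{(1 - q^{2i-3})(1 - q^{2i-2})^2(1 - q^{2i-1})}.
\end{align*}
I would isolate that rational-function identity as a standalone computational lemma, verify it once, and then the rest of the double induction is formal. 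A secondary technical point is carefully tracking the truncation of the index $i$ to $\min(n,h)$ versus the naive range $0 \le i \le n$, since $Q_h(q,z)$ has degree $h$; for $n < h$, which is exactly the hypothesis of the proposition, these ranges coincide and no boundary terms are lost, but I would state this reduction explicitly so that the subsequent application in \eqref{eqn_DivFnGF_finite_diff_eqn_stmt_v1} is clean.
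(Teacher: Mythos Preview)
Your overall architecture---double induction on $h$ using the parallel recurrences \eqref{eqn_ConvFn_PhzQhz_rdefs} for $P_h$ and $Q_h$---is exactly the paper's approach. But you have misdiagnosed where the content lies. When you expand $\sum_{i=0}^n [z^i]Q_h(q,z)\cdot\frac{1-q}{1-q^{n+1-i}}$ via the $Q$-recurrence and reindex the $c_h$- and $\ab_h$-terms by $j=i-1$ and $j=i-2$ respectively, you get
\[
\sum_{i=0}^n [z^i]Q_{h-1}\,\frac{1-q}{1-q^{n+1-i}}
\;-\; c_h\sum_{j=0}^{n-1}[z^j]Q_{h-1}\,\frac{1-q}{1-q^{(n-1)+1-j}}
\;-\; \ab_h\sum_{j=0}^{n-2}[z^j]Q_{h-2}\,\frac{1-q}{1-q^{(n-2)+1-j}},
\]
and the last two sums are \emph{exactly} $c_h\,p_{h-1,n-1}$ and $\ab_h\,p_{h-2,n-2}$ by the inductive hypothesis---no rational-function identity in $c_h(q,q^2;q)$, $\ab_h(q,q^2;q)$ is needed, and the ``standalone computational lemma'' you propose collapses to $0=0$. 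The inductive step is purely formal in $c_h,\ab_h$.

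The genuine obstacle, which your proposal does not address, is the first of the three terms above: it is $\sum_{i=0}^{n}[z^i]Q_{h-1}\cdot\frac{1-q}{1-q^{n+1-i}}$, and you need it to equal $p_{h-1,n}$. The inductive hypothesis on $(h-1)$ only supplies this for $n<h-1$; at the top value $n=h-1$ it does not apply. The paper handles this by observing that $p_{h-1,h-1}=[z^{h-1}]P_{h-1}=0$ since $\deg P_{h-1}\le h-2$, and asserting that the sum then also vanishes. This boundary case $n=h-1$ (together with the base cases $h=1,2$) is where the specific values $(a,b)=(q,q^2)$ actually enter---not in the body of the induction as you anticipate. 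Your remark about the truncation $\min(n,h)$ versus $0\le i\le n$ concerns a different (and harmless) range issue; the $n=h-1$ edge is the one you need to watch.
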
 
\begin{proof} 
We proceed to prove this result by a double induction procedure on $h$ with 
$n \in [0, h)$ for each fixed $h \geq 1$. Let the shorthand notation for the 
coefficients of the numerator functions with respect to $z$ be defined as 
$p_{h,n} := [z^n] P_h(z)$. 
We first show that this result is true in the first cases where 
$h := 1, 2$. 
More precisely, for $h := 1, 2$ we have the next expansions of the functions, 
$P_h(z)$, given by 
\begin{align*}
P_1(q, z) & = 1 \\ 
P_2(q, z) & = 1 - \frac{2q(1-q)z}{(1-q^4)}, 
\end{align*} 
which by computation agrees with the right-hand-side formula stated above. 
Next, we assume that the formula is correct for some $h \geq 1$ and all 
$0 \leq n < h$. We then proceed to 
expand the finite sum formula above using the recurrences in 
\eqref{eqn_ConvFn_PhzQhz_rdefs} as 
\begin{align*} 
\sum_{i=0}^n [z^i] Q_h(q, z) \cdot \frac{1-q}{1-q^{n+1-i}} & = 
     \sum_{i=0}^n \Bigl[[z^i] Q_{h-1}(q, z) -c_h(q) [z^{i-1}] Q_{h-1}(q, z) \\ 
     & \phantom{=\sum_{i=0}^n [z^i]\Bigl[Q_{h-1}(q, z)\ } - 
     \ab_h(q) [z^{i-2}] Q_{h-2}(q, z)\Bigr] \cdot \frac{1-q}{1-q^{n+1-i}} \\ 
     & = 
     \sum_{i=0}^n [z^i] Q_{h-1}(q, z) \cdot \frac{1-q}{1-q^{n+1-i}} - 
     c_h(q) p_{h-1,n-1} - \ab_h(q) p_{h-2,n-2}, 
\end{align*} 
which is equal to $[z^n] P_h(q, z)$ if the leftmost sum term is equal to 
$p_{h-1,n}$. We know that this occurs when $n < h-1$, but we also see that when 
$n = h-1$ that $p_{h-1,n} = 0$, so we may assume that this summation term 
equals $p_{h-1,n}$ for all $0 \leq n < h$. 
\end{proof} 

\section{Proof of the main theorem and restatements of these results} 
\label{Section_ProofsOfMainResults} 

\subsection{Proof of the theorem and convergence as $h \longrightarrow \infty$} 

\begin{proof}[Proof of Theorem \ref{theorem_MainGenabqz_J-FracThm} (Sketch)] 
The expansion in \eqref{eqn_mainthm_stmt_part1} follows 
from well-known recursive properties of the 
convergents to any J-fractions of the form in \eqref{eqn_J-Fraction_Expansions} 
\citep[\S 1.12]{NISTHB}. 
By the rationality of $\Conv_h(q, z)$ for each $h \geq 1$, we have the following 
$h$-order finite difference equation satisfied exactly by the 
sequence of coefficients 
enumerated by $\Conv_h(q, z)$ for all integers $n \geq 0$: 
\begin{align*} 
[z^n] \Conv_h(q, z) & = [z^n] P_h(q, z) - \sum_{i=1}^{\min(n, h)} 
     [z^i] Q_h(q, z) \cdot [z^{n-i}] \Conv_h(q, z). 
\end{align*} 
For each fixed $h \geq 2$, we prove that the first form of 
\eqref{eqn_DivFnGF_finite_diff_eqn_stmt_v1} restated as 
\begin{align} 
\tag{i} 
\frac{1-q}{1-q^{k+1}} & = [z^k] P_h(z) \Iverson{0 \leq k < h} - 
     \sum_{i=1}^{\min(k, h)} 
     [z^i] Q_h(z) \cdot \underset{[z^{k-i}] \Conv_h(q, z)}{\underbrace{ 
     \frac{1-q}{1-q^{k+1-i}}}}, 
\end{align} 
holds for all $k \in [0, h-1]$ by induction on $n$. 
Since $\Conv_h(q, z)$ is rational for all $h \geq 1$, this result 
suffices to prove \eqref{eqn_Convh_coeffs_zn_eq_ratio}. 
Then since we have the known property that $[z^n] \Conv_h(z) = [z^n] J_{\infty}(z)$ 
for all $0 \leq n < h$ (more precisely, for all $0 \leq n < 2h$), this result 
combined with \eqref{eqn_mainthm_stmt_part1} suffices to prove that 
\eqref{eqn_mainthm_stmt_part3} holds in the limiting case 
as $h \longrightarrow \infty$ provided the infinite continued fraction converges. 
We prove the convergence of the infinite J-fraction expansion defined by our 
sequences in Definition \ref{def_ParamSeqDefs_and_Notations} 
using \emph{Pringsheim's theorem} in the proposition stated immediately below. \\ 
\end{proof} 
\begin{proof}[Proof of \eqref{eqn_Convh_coeffs_zn_eq_ratio} by Induction on $n$.] 
Suppose that $h \geq 2$ is fixed. Since $[z^0] P_h(q, z) = 1$ for all $h$, 
it follows that (i) is true when $k = 0$. 
We next suppose that (i) is true for all $k < n$ and proceed to show that this 
implies that the statement is true for $k = n$. 
We first notice that the indices $n-i$ when 
$i \in [1, n]$ are in the range of $\{0, 1, \ldots, n-1\}$, so that we see that 
the right-hand-side of (i) is equal to $[z^n] \Conv_h(q, z)$ by our inductive 
hypothesis. Then by Proposition \ref{prop_Phqz_poly_exp_in_z_lemma}, we see 
that (i) is true for this choice of $k = n$. Hence our claim is true for all 
$n \in [0, h-1]$ when $h \geq 2$ is a fixed positive integer. 
\end{proof} 

\begin{prop}[Convergence of the Infinite J-Fraction Expansion, $J_{\infty}(q, z)$] 
\label{prop_ConvOfInfJ-Fraction_ConvOfThm} 
For fixed non-zero $q \in \mathbb{C}$ such that $0 < |q| < 0.206783$, 
we have that the infinite J-fraction 
\[
J_{\infty}(q, z) = (1-q) \times \sum_{n \geq 0} \frac{z^n}{1-q^{n+1}}, 
\] 
generating the divisor function is convergent with 
$\Conv_h(q, z) \longrightarrow J_{\infty}(q, z)$ in the limiting case as 
$h \longrightarrow \infty$. Moreover, the infinite sum for 
$J_{\infty}(q, z)$ expanded in \eqref{eqn_mainthm_stmt_part3} 
converges uniformly as a function of $z$. In particular, we may differentiate this 
sum representing the infinite J-fraction termwise with respect to $z$. 
\end{prop}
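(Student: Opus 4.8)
The plan is to invoke Pringsheim's theorem (also called the Śleszyński–Pringsheim theorem) for continued fractions, which guarantees convergence of a continued fraction $\cfrac{\alpha_1}{\beta_1 + \cfrac{\alpha_2}{\beta_2 + \cdots}}$ whenever $|\beta_n| \geq |\alpha_n| + 1$ for all $n$. First I would rewrite the J-fraction \eqref{eqn_J-Fraction_Expansions} with $(a,b) := (q,q^2)$ and $z \mapsto q$ in the canonical form to which Pringsheim's criterion applies: here the partial numerators are (after the first) $-\ab_i(q,q^2;q)\, q^2$ and the partial denominators are $1 - c_i(q,q^2;q)\, q$, using the explicit formulas for $\ab_i$ and $c_i$ from Definition \ref{def_ParamSeqDefs_and_Notations}. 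Substituting $(a,b) = (q,q^2)$ simplifies these: $\ab_i(q,q^2;q)$ becomes $q \cdot q^{2i-4}(1-q^{i-1})(1-q^{i-1})(q-q^{i})(1-q^{i-1})$ over the denominator $(1-q^{2i-3})(1-q^{2i-2})^2(1-q^{2i-1})$, which is $O(q^{2i-3})$ as $i \to \infty$ for fixed small $|q|$, and similarly $c_i(q,q^2;q) = O(q^{i-2})$. So the tail terms decay geometrically.

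Next I would establish the two inequalities required by Pringsheim, $|1 - c_i(q,q^2;q)\, q| \geq |\ab_i(q,q^2;q)\, q^2| + 1$ for every $i \geq 2$, together with the analogous bound for $i = 1$ where $c_1 = (a-1)/(b-1) = 1/(1+q)$ so the first denominator is $1 - q/(1+q) = 1/(1+q)$. The bound $0 < |q| < 0.206783$ is presumably exactly the threshold below which these inequalities hold uniformly in $i$; I would verify this by bounding $|c_i(q,q^2;q)| \leq C_1 |q|^{i-2}$ and $|\ab_i(q,q^2;q)| \leq C_2 |q|^{2i-3}$ with explicit constants $C_1, C_2$ coming from bounding the $q$-Pochhammer-type denominators away from zero (each factor $1 - q^j$ with $j \geq 1$ has $|1-q^j| \geq 1 - |q| > 0$), then checking that $\sum_i (C_1 |q|^{i-1} + C_2 |q|^{2i-1})$ is small enough that $|1 - c_i q| \geq 1 - |c_i q| \geq 1 + |\ab_i q^2|$ holds term by term. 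The worst case is the small-$i$ terms, where the numerical constant $0.206783$ is pinned down. Once Pringsheim applies, $\Conv_h(q,q) \to J_\infty(q,q)$, and by \eqref{eqn_Convh_coeffs_zn_eq_ratio} together with the $2h$-order accuracy of the convergents, the limit equals $(1-q)\sum_{n\geq 0} q^n/(1-q^{n+1})$.

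For the uniform convergence in $z$ (rather than just at $z \mapsto q$), I would treat $z$ as ranging over a compact subset of the annulus $|q^2/q| = |q| < |z| < 1$, i.e. $|z| \leq \rho$ for some $\rho < 1$ with $|z| \geq |q|/\rho'$, and note that the bounds above go through with $q^2$ replaced by $z^2$ and $q$ by $z$ in the partial numerators/denominators: the $\ab_i$ now carry a factor $z^{2i-2}$ (up to the overall $z$-independent constant) and the $c_i$ a factor $z^{i-1}$ relative to a fixed reference, so uniform geometric decay persists on the compact set, giving uniform convergence of the convergents by the standard continued-fraction error estimate (the tails are dominated by a convergent geometric series independent of $z$). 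Uniform convergence on compact subsets of an open set of $z$ then licenses termwise differentiation with respect to $z$, which is the final assertion. The main obstacle I expect is the bookkeeping to pin the explicit numerical threshold $0.206783$: one must track the constants in the bounds on $c_i$ and $\ab_i$ carefully enough — in particular bounding $|1 - q^{2i-5}|$, $|1 - q^{2i-4}|$ etc. from below and the numerator $q$-Pochhammer factors from above — so that the Pringsheim inequality is genuinely verified for the small-index terms, which are the binding constraints, rather than merely asymptotically for large $i$.
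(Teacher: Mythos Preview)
Your plan is essentially the paper's own approach: invoke Pringsheim's theorem by verifying $|1-c_i(q,q^2;q)z| \geq |\ab_i(q,q^2;q)z^2| + 1$ after setting $z \mapsto q$, and then use an $M$-test argument on the terms of \eqref{eqn_mainthm_stmt_part3} for uniform convergence. Two small points where the paper's execution differs from your sketch: first, the paper does not attempt to verify the Pringsheim inequality at $i=1$ (indeed $|1-c_1q|=|1/(1+q)|$ is not $\geq 2$ for small $|q|$); it checks the inequality only for $i \geq 2$, which suffices since convergence of the tail forces convergence of the whole continued fraction. Second, the specific threshold $0.206783$ is not pinned down by a worst-case small-$i$ computation as you suggest, but rather by the substitution $t:=q^{i-1}$, which reduces the Pringsheim inequality uniformly in $i$ to a single inequality in $t$ that one checks numerically holds for $0<|t|<0.206783$; since $|q^{i-1}|\leq|q|$ for $i\geq 2$, this gives the range on $|q|$ directly. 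For uniform convergence the paper bounds the $i$-th term of \eqref{eqn_mainthm_stmt_part3} by $A\,b_q^i\,q^{i^2}z^{2i}$, exploiting the $q^{(i-1)^2}$ factor in $\lambda_i(q,q^2;q)$, which is a slightly sharper and more direct route than the generic continued-fraction tail estimate you propose.
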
 
\begin{proof}[Proof of Convergence] 
We define the sequences $a_h, b_h$ as in the following equations and proceed to 
use \emph{Pringsheim's theorem} to prove convergence of 
$\Conv_{\infty}(q, z) \longrightarrow J_{\infty}(z)$ when $|q| < 0.206783$ 
\citep[\S 1.12(v)]{NISTHB}: 
\begin{align*} 
a_h(q) & := \frac{z^2 q^{2i-3} \left(1-q^{i-1}\right)^4}{(1-q^{2i-3}) (1-q^{2i-2})^2 
     (1-q^{2i-1})} \\ 
b_h(q) & : = \frac{1-z q^{i-2}\left(2q + q^{2i} - q^i - q^{i+1} -q^2-q^3+q^{i+2} 
     \right)}{(1-q^{2i-2})(1-q^{2i})}. 
\end{align*} 
In particular, if we can show that $|b_h(q)| \geq |a_h(q)| + 1$ for all sufficiently 
large $h \geq h_0$, then we have that 
$\Conv_h(q, z) \longrightarrow J_{\infty}(q, z)$ as $h \longrightarrow \infty$,  
that $|\Conv_h(q, z)| < 1$ for all $h$, and that our infinite J-fraction satisfies 
$|J_{\infty}(q, z)| < 1$. 
Since we will be setting $z \mapsto q$ in the generating functions for the 
divisor and generalized sum-of-divisors functions in the 
results of the next subsection, 
we may assume that $z \equiv q$, and in particular that $|z| = |q|$. 

We first expand several inequalities for $|a_i| + 1$ when $i \geq h_0$ 
is taken to be a sufficiently large positive integer: 
\begin{align*} 
|a_i| + 1 & = \frac{|q|^{2i-1} |1-q^{i-1}|^2}{|1-q^{2i-3}| |1-q^{2(i-1)}| 
     |1-q^{2i-1}|} + 1 \\ 
     & \leq 
     \frac{|q|^{2i-1} \left(1+|q|^{i-1}\right)^2}{\left(1-|q|^{2i-3}\right) 
     \left(1-|q|^{2(i-1)}\right) \left(1-|q|^{2i-1}\right)} + 1 \\ 
     & \leq 
     \frac{|q|^{2i-1} \left(1+|q|^{i-1}\right)^2 + (1-|q|^{i-1})^4}{ 
     \left(1-|q|^{i-1}\right)^4}. 
\end{align*}
Secondly, we expand a few corresponding inequalities for $|b_i|$ in the 
following forms: 
\begin{align*} 
|b_i| & = 
     \frac{|1+q^{i-1}\left(2q+q^{2i}+q^{i+2}\right) - 
     q^{i-1}\left(q^i q^{i+1} + q^2 + q^3\right)|}{|1-q^{2(i-1)}| |1-q^{2i}|} \\ 
     & \geq 
     \frac{|1+q^{i-1}\left(2q+q^{2i}+q^{i+2}\right) - 
     q^{i-1}\left(q^i q^{i+1} + q^2 + q^3\right)|}{ 
     \left(1+q^{2(i-1)}\right) \left(1+q^{2i}\right)} \\ 
     & \geq 
     \frac{|1+q^{i-1}\left(2q+q^{2i}+q^{i+2}\right) - 
     q^{i-1}\left(q^i q^{i+1} + q^2 + q^3\right)|}{ 
     \left(1+q^{i-1}\right)^2} \\ 
     & \geq 
     \frac{C_{q,i}^2 \times |1+q^{i-1}\left(2q+q^{2i}+q^{i+2}\right) - 
     q^{i-1}\left(q^i q^{i+1} + q^2 + q^3\right)|}{ 
     \left(1-q^{i-1}\right)^2},\ \\ 
     & \phantom{\geq\ } 
     \text{ for some } 
     0 < C_{q,i} \leq \frac{(1-|q|^{i-1})^2}{1+|q|^{2i-2}} \leq 1. 
\end{align*} 
Thus if we let $t := q^{i-1}$, it follows that if we can show that there is a 
positive constant, $C_{q,i}$, depending on $q$ and the $i \geq h_0 \geq 1$ 
satisfying the above inequality and the condition that 
\begin{align*} 
\tag{i} 
C_{q,i}^2 \left|1 + q(q^2+q-2) t + q(1-q) t^2 - q^2 t^3\right| & \geq 
     \left(1-|t|\right)^4 + |t|^2 (1+|t|)^2, 
\end{align*} 
then we have proven that $|b_i| \geq |a_i| + 1$ for all sufficiently large 
$i \geq 1$. The condition in (i) combined with the inequality defining $C_{q,i}$ 
in the previous equations for $|b_i|$ and a tentative technical requirement that 
$|q^2+q-2|, |1-q| \leq 1$, imply that we must find such a $C_{q,i}$ satisfying 
\begin{align*} 
\sqrt{\frac{\left(1-|t|\right)^4 + |t|^2 (1+|t|)^2}{1 + |t| + |t|^2 + |t|^3}} 
     & \leq 
     \sqrt{\frac{\left(1-|t|\right)^4 + |t|^2 (1+|t|)^2}{ 
     1 + |q(q^2+q-2)| t + |q(1-q)| |t|^2 - |q|^2 |t|^3}} \\ 
     & \leq 
     \sqrt{\frac{\left(1-|t|\right)^4 + |t|^2 (1+|t|)^2}{ 
     \left|1 + q(q^2+q-2) t + q(1-q) t^2 - q^2 t^3\right|}} \\ 
     & \leq C_{q,i} \leq \frac{(1-|t|)^2}{1+|t|^2} \leq 1, 
\end{align*} 
By numerical computations the requisite inequality 
\begin{align*} 
\frac{(1-|t|)^2}{1+|t|^2} & \geq 
     \sqrt{\frac{\left(1-|t|\right)^4 + |t|^2 (1+|t|)^2}{1 + |t| + |t|^2 + |t|^3}}, 
\end{align*} 
is satisfied for $|t|$ in the approximate range $0 < |t| < 0.206783$. 
Then we finally see that since $0 < |q|^{i+1} < |q|^i$ for all $i \geq 1$, 
if $0 < |q| < 0.206783$ we have convergence of our infinite J-fraction. 
\end{proof}
\begin{proof}[Proof of the Uniform Convergence of \eqref{eqn_mainthm_stmt_part3}] 
Moreover, we can prove easily that the infinite sum for $J_{\infty}(q, z)$ 
defined in \eqref{eqn_mainthm_stmt_part3} is uniformly convergent as a function of 
$z$. For the parameter setting of $(a, b) := (q, q^2)$ in the infinite J-fraction, 
$J_{\infty}(a, b; q, z)$, let the inner terms in this sum be denoted by 
\begin{align*} 
\Conv_{i+1,q,z} & := \frac{q(1-q) (-1)^{i-1} q^{i^2} (q; q)_i^4 z^{2i}}{ 
     (q; q^2)_i^2 (q^2; q^2)_i \times Q_{i}(q, z) Q_{i+1}(q, z)}. 
\end{align*} 
Then for some positive constant $A$, a fixed function $b_q$, and non-zero 
$q, z \in \mathbb{C}$ such that $|c_q z|, |qz| < 1$ where $|q| < 0.206783$ 
as above, we can bound these inner sum terms by 
\begin{equation*} 
|\Conv_{i,q,z}| \leq A \cdot b_q^i q^{i^2} z^{2i} < A, 
\end{equation*} 
in which case the \emph{Weierstrass M-test} implies our result. That is, the 
convergence of the sum in \eqref{eqn_mainthm_stmt_part3} is uniform in $z$. 
\end{proof} 

\begin{cor}[A Complete Generating Function for the Divisor Function] 
\label{cor_GF_for_the_DivisorFn_dn} 
Let the generating function, $D_{0,h}(q, z)$ be defined as 
\begin{align} 
\label{eqn_cor_GF_for_the_DivisorFn_dn_D0hqzFn_def} 
D_{0,h}(q, z) & := \frac{q(1+q)}{(1-q)(1+q-z)} + \sum_{j=1}^{h-1} 
     \frac{q \cdot q^{j^2} (q; q)_j^4 \times z^{2j}}{(q; q^2)_j^2 (q^2; q^2)_j^2} 
     \times \widetilde{D}_{0,j}(q, z)^{-1}, 
\end{align} 
where the function, $\widetilde{D}_{0,j}(q, z)$, is defined by the 
$q$-coefficient expansions 
\begin{align} 
\label{eqn_Qim1Qi_ConvFnSum_Denom_Exp} 
\widetilde{D}_{0,j}(q, z) & := 
     \sum_{n=0}^{2j} \gkpSI{j+1}{n}_{q,q,q^2} \gkpSI{j}{2j-n} z^n \\ 
\notag 
     & \phantom{=} + 
     \sum_{n=0}^{2j+1} \Biggl( 
     \sum_{\substack{1 \leq m_1 \leq \left\lfloor \frac{j}{2} \right\rfloor \\ 
                     1 \leq m_2 \leq \left\lfloor \frac{j+1}{2} \right\rfloor}} 
     \sum_{\substack{1 \leq s_1 \leq mj \\ 1 \leq s_2 \leq m(j+1)}} 
     \sum_{\substack{1 \leq k_1 \leq s_1 \\ 1 \leq k_2 \leq s_2}} 
          \gkpSI{j+1}{n-k_2}_{q,q,q^2} \gkpSI{j}{2j+1-n-k_1}_{q,q,q^2} 
          \times \\ 
\notag 
     & \phantom{:=\sum\Biggl(\sum\sum\sum\ } \times 
     (-1)^{m_1+m_2} [z^{k_1-2m_1}] S_{j,m_1,s_1}(z) \cdot 
     [z^{k_2-2m_2}] S_{j+1,m_2,s_2}(z) 
     \Biggr) z^n \\ 
\notag 
     & \phantom{=} + 
     \sum_{n=0}^{2j+1} \sum_{m=1}^{\lfloor (j+1)/2 \rfloor} \sum_{s=0}^{m(j+1)} 
     \sum_{k=0}^s \gkpSI{j+1}{n-k}_{q,q,q^2} \gkpSI{j}{2j+1-n}_{q,q,q^2} 
     (-1)^m [z^{k-2m}] S_{j+1,m,s}(z) \times z^n \\ 
\notag 
     & \phantom{=} + 
     \sum_{n=0}^{2j+1} \sum_{m=1}^{\lfloor j/2 \rfloor} \sum_{s=0}^{mj} 
     \sum_{k=0}^s \gkpSI{j}{n-k}_{q,q,q^2} \gkpSI{j+1}{2j+1-n}_{q,q,q^2} 
     (-1)^m [z^{k-2m}] S_{j,m,s}(z) \times z^n. 
\end{align} 
Then for all $h \geq 2$ and $0 \leq n < h$, we have that 
$d(n) = [q^n] D_{0,h}(q, q)$ whenever $0 < |q| < 0.206783$ and 
that the limiting case yields the convergent sum 
\begin{align*} 
D_{0,\infty}(q, q) & = \sum_{n \geq 1} \frac{q^n}{1-q^n} = \sum_{m \geq 1} d(m) q^m. 
\end{align*} 
\end{cor}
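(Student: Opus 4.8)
The plan is to derive Corollary \ref{cor_GF_for_the_DivisorFn_dn} directly from Theorem \ref{theorem_MainGenabqz_J-FracThm} and Proposition \ref{prop_ConvOfInfJ-Fraction_ConvOfThm} by making the abstract convergent sum in \eqref{eqn_mainthm_stmt_part1} completely explicit in the special case $(a,b) := (q,q^2)$, and then substituting $z \mapsto q$. First I would take \eqref{eqn_mainthm_stmt_part1} with the modulus products $\lambda_i(q,q^2;q)$ written out via \eqref{eqn_lambda_abq_hth_modulus_products}, which for $(a,b)=(q,q^2)$ collapses the $q$-Pochhammer factors to the form appearing in the numerators of \eqref{eqn_cor_GF_for_the_DivisorFn_dn_D0hqzFn_def}; reindexing $i = j+1$ (so that $i-1 = j$) turns the sum $\sum_{i=1}^h$ into the $j = 1$ through $j = h-1$ sum shown, with the $i=1$ term giving the leading rational summand $\tfrac{q(1+q)}{(1-q)(1+q-z)}$ after dividing through by the overall factor $(1-q)$ recorded in \eqref{eqn_mainthm_stmt_part3}.

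Next I would identify the denominator $\widetilde{D}_{0,j}(q,z)$ as exactly the product $Q_j(q,z)\,Q_{j+1}(q,z)$ appearing in \eqref{eqn_mainthm_stmt_part1}, expanded coefficient-by-coefficient. Here I invoke Lemma \ref{lemma_ExactExpsOfThe_ConvDenomFns}(ii), which gives $[z^n]Q_h(z)$ in terms of the Stirling $q$-coefficients $\gkpSI{h}{n}_{c}$ and the nested special sums $S_{h,m,s}(z)$; multiplying the expansion of $Q_j$ by that of $Q_{j+1}$ and collecting the coefficient of $z^n$ produces precisely the four groups of terms in \eqref{eqn_Qim1Qi_ConvFnSum_Denom_Exp}: the pure Stirling-coefficient convolution $\sum_n \gkpSI{j+1}{n}\gkpSI{j}{2j-n} z^n$ from the product of the leading pieces, the doubly-summed cross term where both factors contribute an $S$-sum, and the two mixed terms where exactly one factor contributes an $S$-sum. (I would note that the paper's indexing on the middle double sum is slightly informal — the ranges on $m_1,m_2$ and $s_1,s_2$ should be read as the natural ones inherited from Lemma \ref{lemma_ExactExpsOfThe_ConvDenomFns} — but the structure is forced by that lemma.) This is bookkeeping, not mathematics, so I would not grind through it in detail.

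With the explicit form established, the finite-$h$ claim $d(n) = [q^n] D_{0,h}(q,q)$ for $0 \le n < h$ follows immediately: by \eqref{eqn_Convh_coeffs_zn_eq_ratio} we have $[z^n]\Conv_h(q,q^2;q,z) = (q;q)_n/(q^2;q)_n = (1-q)/(1-q^{n+1})$ for all $0 \le n < 2h$ (in particular for $0 \le n < h$), so $\Conv_h$ divided by $(1-q)$ agrees with $\sum_{n\ge 0} z^n/(1-q^{n+1})$ through order $z^{h-1}$; setting $z \mapsto q$ and extracting $[q^n]$ picks out the divisor function via the classical Lambert series $\sum_{n\ge1} q^n/(1-q^n) = \sum_{m\ge1} d(m)q^m$. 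The limiting statement $D_{0,\infty}(q,q) = \sum_{n\ge1} q^n/(1-q^n)$ is exactly \eqref{eqn_mainthm_stmt_part3} after $z\mapsto q$, whose validity for $0 < |q| < 0.206783$ is guaranteed by the convergence and uniform-convergence results of Proposition \ref{prop_ConvOfInfJ-Fraction_ConvOfThm}; uniform convergence in $z$ also licenses evaluating the termwise limit at $z = q$.

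The main obstacle I anticipate is purely organizational: correctly matching the four term-groups of $\widetilde{D}_{0,j}$ against the Cauchy product of two copies of Lemma \ref{lemma_ExactExpsOfThe_ConvDenomFns}(ii), keeping the shifted arguments $[z^{k-2m}]S_{h,m,s}(z)$ and the binomial-type index $2j+1-n$ aligned, and confirming that the $q$-coefficient subscripts $\gkpSI{j}{\cdot}_{q,q,q^2}$ versus $\gkpSI{j+1}{\cdot}_{q,q,q^2}$ land on the right factors. There is no genuine analytic difficulty beyond what Proposition \ref{prop_ConvOfInfJ-Fraction_ConvOfThm} already supplies; the corollary is essentially a transcription of Theorem \ref{theorem_MainGenabqz_J-FracThm} into fully expanded coordinates.
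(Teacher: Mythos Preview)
Your proposal is correct and follows essentially the same route as the paper's own proof, which is a one-sentence appeal to Theorem~\ref{theorem_MainGenabqz_J-FracThm}, Proposition~\ref{prop_ConvOfInfJ-Fraction_ConvOfThm}, and Lemma~\ref{lemma_ExactExpsOfThe_ConvDenomFns}. You have simply unpacked in detail the three ingredients the paper names: specializing \eqref{eqn_mainthm_stmt_part1} and \eqref{eqn_lambda_abq_hth_modulus_products} to $(a,b)=(q,q^2)$, recognizing $\widetilde{D}_{0,j}(q,z)$ as the Cauchy product $Q_j(q,z)\,Q_{j+1}(q,z)$ expanded via Lemma~\ref{lemma_ExactExpsOfThe_ConvDenomFns}(ii), and invoking Proposition~\ref{prop_ConvOfInfJ-Fraction_ConvOfThm} for convergence and the substitution $z\mapsto q$.
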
 
\begin{proof} 
These two results are only a restatement of 
Theorem \ref{theorem_MainGenabqz_J-FracThm}, 
whose convergence is guaranteed by 
Proposition \ref{prop_ConvOfInfJ-Fraction_ConvOfThm}, 
in light of the expansions of the convergent denominator functions, $Q_h(q, z)$, 
given by Lemma \ref{lemma_ExactExpsOfThe_ConvDenomFns}. 
\end{proof} 

\subsection{Modified generating functions for the integer-order 
            sums-of-divisors functions} 
\label{subSection_ModifiedGFs_for_IntOrderDivFns} 

This section employs the uniform convergence of the right-hand-side sum in 
\eqref{eqn_mainthm_stmt_part3} from Theorem \ref{theorem_MainGenabqz_J-FracThm} 
to formulate new generating functions for the 
generalized sums-of-divisors functions, 
$\sigma_{\alpha}(n)$ when $\alpha \geq 1$ is integer-valued. 
In particular, we borrow a more general result from the reference 
\citep[\S 2]{SQSERIESMDS} which states that given any $m \in \mathbb{Z}^{+}$ and 
any sequence, $\langle f_n \rangle$, whose ordinary generating function (OGF), 
$F_f(z)$, has higher-order derivatives of orders $j$ for all $0 \leq j \leq m$, 
we have a transformation of this OGF into the OGF enumerating the modified sequence of 
$\langle n^m f_n \rangle$ given by the following finite sum\footnote{ 
     The \emph{Stirling numbers of the second kind}, $\gkpSII{n}{k}$, are also 
     commonly denoted by $S(n, k)$ for integers $n, k \geq 0$ such that 
     $0 \leq k \leq n$ \citep[\S 26.8]{NISTHB}. 
}: 
\begin{align} 
\label{eqn_nPowmTimesSeqfn_OGF_transform_stmt_v1} 
\sum_{n \geq 0} n^m f_n z^n & = \sum_{j=0}^m \gkpSII{m}{j} z^j F_f^{(j)}(z). 
\end{align} 

\begin{prop}[Generating Functions for the Generalized sums-of-divisors Functions] 
\label{prop_GFs_for_SigmaAlphan_AlphaAPosInt} 
Let the notation for the functions, $\widetilde{D}_{0,j}(q, z)$, be defined 
as in the statement of Corollary \ref{cor_GF_for_the_DivisorFn_dn} 
in the previous subsection. 
Then for each fixed integer $\alpha \geq 1$, we have the generating functions 
for the sums-of-divisors functions, $\sigma_{\alpha}(n)$, expanded in terms of the 
next finite sums involving the Stirling numbers of the second kind. 
\begin{align*} 
D_{\alpha,\infty}(q) & = \sum_{m \geq 1} \sigma_{\alpha}(m) q^m = 
     \sum_{n \geq 1} \frac{n^{\alpha} q^n}{1-q^n} \\ 
     & = 
     \sum_{i=0}^{\alpha} \gkpSII{\alpha}{i} \left(
     \frac{z^i i! q(1+q)}{(1-q)(1+q-z)^{i+1}} + 
     \sum_{j \geq 1} \frac{q \cdot q^{j^2} (q; q)_j^4 \times z^{i}}{ 
     (q; q^2)_j^2 (q^2; q^2)_j^2} \times 
     D^{(i)}\left[\frac{z^{2j}}{\widetilde{D}_{0,j}(q, z)}\right]
     \right) \Bigg|_{z=q} 
\end{align*} 
\end{prop}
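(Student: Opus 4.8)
The plan is to derive the stated formula for $D_{\alpha,\infty}(q)$ by applying the generating function transformation in \eqref{eqn_nPowmTimesSeqfn_OGF_transform_stmt_v1} to the Lambert series identity established in Theorem \ref{theorem_MainGenabqz_J-FracThm}. First I would set up the base generating function. By \eqref{eqn_mainthm_stmt_part3} and Corollary \ref{cor_GF_for_the_DivisorFn_dn}, the function
\begin{align*}
F(q,z) & := \frac{q(1+q)}{(1-q)(1+q-z)} + \sum_{j \geq 1} \frac{q \cdot q^{j^2}(q;q)_j^4 \, z^{2j}}{(q;q^2)_j^2 (q^2;q^2)_j^2}\, \widetilde{D}_{0,j}(q,z)^{-1}
\end{align*}
satisfies $F(q,z) = (1-q)^{-1} \cdot (1-q)\sum_{n\geq 0} z^n/(1-q^{n+1}) = \sum_{n\geq 0} z^n/(1-q^{n+1})$, so that $\sum_{n\geq 1} z^n/(1-q^n) = z\,F(q,z) - $ (appropriate shift), and more directly $\sum_{n \geq 0} f_n z^n$ with $f_n = 1/(1-q^{n+1})$ equals $F(q,z)$. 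The key point, guaranteed by Proposition \ref{prop_ConvOfInfJ-Fraction_ConvOfThm}, is that this series converges uniformly in $z$ on a neighborhood of $z = q$ when $0 < |q| < 0.206783$, so $F(q,z)$ is analytic in $z$ there and may be differentiated termwise to any order $j \leq \alpha$.

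Next I would apply \eqref{eqn_nPowmTimesSeqfn_OGF_transform_stmt_v1} with $m = \alpha$ and the sequence $f_n$ chosen so that, after the substitution $z \mapsto q$, the left-hand side becomes the desired Lambert series for $\sigma_{\alpha}(n)$. Concretely, since $\sum_{n\geq 1} n^\alpha q^n/(1-q^n) = \sum_{m\geq 1}\sigma_\alpha(m)q^m$ and $\sum_{n\geq 1} n^\alpha z^n/(1-q^n)\big|_{z=q}$ is exactly $\sum_{n \geq 0} n^\alpha f_n z^n\big|_{z=q}$ with $f_n = 1/(1-q^n)$ for $n \geq 1$ (the $n=0$ term vanishing because of the $n^\alpha$ factor for $\alpha \geq 1$), the transformation formula gives
\begin{align*}
\sum_{n\geq 1} \frac{n^\alpha q^n}{1-q^n} & = \sum_{i=0}^{\alpha} \gkpSII{\alpha}{i} z^i \, \partial_z^i \! \left[\sum_{n\geq 0}\frac{z^n}{1-q^n}\right]\Bigg|_{z=q},
\end{align*}
where the bracketed series is $z \cdot F(q,z)$ up to the harmless reindexing between $1/(1-q^n)$ and $1/(1-q^{n+1})$. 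Substituting the explicit closed form of $F(q,z)$ into the $i$-th derivative, using linearity of $\partial_z^i$ to pass the derivative inside the uniformly convergent sum, and computing $\partial_z^i[(1+q-z)^{-1}] = i!\,(1+q-z)^{-(i+1)}$ for the leading term, then relabeling $\partial_z^i$ as $D^{(i)}$ on the summand $z^{2j}/\widetilde{D}_{0,j}(q,z)$, yields precisely the claimed expression after evaluating at $z=q$.

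The main obstacle — and the step requiring the most care — is the bookkeeping around the index shift between the two normalizations $f_n = 1/(1-q^{n+1})$ (natural from the J-fraction) and $f_n = 1/(1-q^n)$ (natural for $\sigma_\alpha$), together with the placement of the extra power of $z$ that converts $\sum z^n/(1-q^{n+1})$ into $\sum_{n\geq 1} z^n/(1-q^n)$; one must check that the $z^i$ prefactors and the Stirling-number weights $\gkpSII{\alpha}{i}$ land on the correct terms and that the $i=0$ contribution reproduces $D_{0,\infty}$. A secondary point is justifying the termwise differentiation: this is exactly what the uniform-convergence statement in Proposition \ref{prop_ConvOfInfJ-Fraction_ConvOfThm} (and its proof via the Weierstrass $M$-test) was established for, so each $D^{(i)}$ for $0 \leq i \leq \alpha$ may legitimately be moved inside the sum over $j$, and the denominators $\widetilde{D}_{0,j}(q,z)$ are nonvanishing near $z=q$ since they equal $Q_j(q,z)Q_{j+1}(q,z)$ up to the explicit $q$-Pochhammer normalization, whose convergents are nonzero in the relevant region. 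Everything else is the routine substitution of the Lemma \ref{lemma_ExactExpsOfThe_ConvDenomFns} expansion of $\widetilde{D}_{0,j}(q,z)$ already recorded in Corollary \ref{cor_GF_for_the_DivisorFn_dn}.
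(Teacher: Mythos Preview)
Your proposal is correct and follows essentially the same approach as the paper: apply the transformation \eqref{eqn_nPowmTimesSeqfn_OGF_transform_stmt_v1} to the series for $D_{0,\infty}(q,z)$ from Corollary \ref{cor_GF_for_the_DivisorFn_dn}, invoke Proposition \ref{prop_ConvOfInfJ-Fraction_ConvOfThm} to justify termwise differentiation in $z$, and evaluate at $z=q$. The paper's own proof is a one-sentence appeal to exactly these three ingredients, so your only addition is the explicit bookkeeping around the index shift and the derivative of the leading $(1+q-z)^{-1}$ term.
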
 
\begin{proof} 
This result is an immediate consequence of our new transformation result in 
\eqref{eqn_nPowmTimesSeqfn_OGF_transform_stmt_v1} applied to the statement of 
Corollary \ref{cor_GF_for_the_DivisorFn_dn}, where we know by 
Proposition \ref{prop_ConvOfInfJ-Fraction_ConvOfThm} that 
we may differentiate the sum in \eqref{eqn_cor_GF_for_the_DivisorFn_dn_D0hqzFn_def} 
termwise with respect to $z$. 
\end{proof} 

\begin{cor}[Special Cases] 
For the special cases of $\alpha := 1, 2$, we have more explicit expansions of the 
generating functions for the sums-of-divisors functions, 
$\sigma_1(n)$ and $\sigma_2(n)$, given by 
\begin{align*} 
D_{1,\infty}(q) & = \sum_{m \geq 1} \sigma_{1}(m) q^m = 
     \sum_{n \geq 1} \frac{n^{} q^n}{1-q^n} \\ 
     & = 
     \frac{q^2(1+q)}{1-q} + 
     \sum_{j \geq 1} \frac{q \cdot q^{j^2} (q; q)_j^4}{ 
     (q; q^2)_j^2 (q^2; q^2)_j^2} \times \left(
     \frac{2j q^{2j}}{\widetilde{D}_{0,j}(q, q)} - 
     \frac{q^{2j+1}\widetilde{D}_{0,j}^{\prime}(q, q)}{\widetilde{D}_{0,j}(q, q)^2} 
     \right) \\ 
D_{2,\infty}(q) & = \sum_{m \geq 1} \sigma_{2}(m) q^m = 
     \sum_{n \geq 1} \frac{n^{2} q^n}{1-q^n} \\ 
     & = 
     \frac{q^2(1+q)(1+2q)}{1-q} + 
     \sum_{j \geq 1} \frac{q \cdot q^{j^2} (q; q)_j^4}{ 
     (q; q^2)_j^2 (q^2; q^2)_j^2} \times \Biggl(
     \frac{4j^2 q^{2j}}{\widetilde{D}_{0,j}(q, q)} - 
     \frac{(4j+1) q^{2j+1}\widetilde{D}_{0,j}^{\prime}(q, q)}{ 
     \widetilde{D}_{0,j}(q, q)^2} \\ 
     & \phantom{=\frac{q^2(1+q)(1+2q)}{1-q} +\sum\ } - 
     \frac{q^{2j+1} \left( 
     \widetilde{D}_{0,j}(q, q)\widetilde{D}_{0,j}^{\prime\prime}(q, q) - 
     2 \widetilde{D}_{0,j}^{\prime}(q, q)^2\right)}{\widetilde{D}_{0,j}(q, q)^3} 
     \Biggr), 
\end{align*} 
where the derivatives of $\widetilde{D}_{0,j}(q, z)$ are partial derivatives taken 
with respect to the second parameter $z$ whose forms are easily expanded by 
differentiating the polynomial functions of $z$ in 
\eqref{eqn_Qim1Qi_ConvFnSum_Denom_Exp}. 
\end{cor}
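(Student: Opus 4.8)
The plan is to specialize Proposition~\ref{prop_GFs_for_SigmaAlphan_AlphaAPosInt} to $\alpha = 1$ and $\alpha = 2$ and to carry out, for these two values, the low-order differentiations $D^{(i)}$ that occur on its right-hand side. First I would record the relevant Stirling numbers of the second kind: $\gkpSII{1}{0} = 0$, $\gkpSII{1}{1} = 1$, and $\gkpSII{2}{0} = 0$, $\gkpSII{2}{1} = \gkpSII{2}{2} = 1$, with all other entries in these two rows vanishing. Hence the outer sum over $i$ in the Proposition collapses to the single term $i = 1$ when $\alpha = 1$, and to the two terms $i = 1, 2$ when $\alpha = 2$; in particular the potential $i = 0$ contribution is killed by $\gkpSII{\alpha}{0} = 0$, so both expansions involve only first (and, for $\sigma_2$, second) derivatives.

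Next I would treat the closed-form ``leading'' summand $\frac{z^i\, i!\, q(1+q)}{(1-q)(1+q-z)^{i+1}}$ of the Proposition. Setting $z = q$ makes $1 + q - z = 1$, so the $i = 1$ piece evaluates to $\frac{q^2(1+q)}{1-q}$ and the $i = 2$ piece to $\frac{2q^3(1+q)}{1-q}$; adding the surviving $i$-contributions gives the prefactors $\frac{q^2(1+q)}{1-q}$ and $\frac{q^2(1+q)(1+2q)}{1-q}$ shown in the two cases. For the infinite-sum part I would expand $D^{(i)}\!\bigl[z^{2j}/\widetilde{D}_{0,j}(q,z)\bigr]$ via the Leibniz rule applied to the product $z^{2j}\cdot\widetilde{D}_{0,j}(q,z)^{-1}$, using that $\widetilde{D}_{0,j}(q,z)$ is a polynomial in $z$ --- precisely the one written out in \eqref{eqn_Qim1Qi_ConvFnSum_Denom_Exp} --- so that each termwise derivative is legitimate and $\widetilde{D}_{0,j}'(q,q)$, $\widetilde{D}_{0,j}''(q,q)$ are well defined. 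Differentiating $\widetilde{D}_{0,j}(q,z)^{-1}$ once and twice produces the denominators $\widetilde{D}_{0,j}(q,q)$, $\widetilde{D}_{0,j}(q,q)^2$, $\widetilde{D}_{0,j}(q,q)^3$ and, in the second-order case, the combination $\widetilde{D}_{0,j}\widetilde{D}_{0,j}'' - 2(\widetilde{D}_{0,j}')^2$ appearing in the $\sigma_2$ formula; multiplying by the outer $z^i$ and evaluating at $z = q$ supplies the remaining powers of $q$ and the numerical coefficients $2j$, $4j^2$, $4j+1$ obtained by combining $2j$ and $2j(2j-1)$.

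Finally I would assemble the two displays by adding the $i = 1$ (and, for $\alpha = 2$, the $i = 2$) contributions coefficient by coefficient, collecting the coefficients of $q^{\bullet}/\widetilde{D}_{0,j}$, of $q^{\bullet}\widetilde{D}_{0,j}'/\widetilde{D}_{0,j}^{2}$, and of $q^{\bullet}\bigl(\widetilde{D}_{0,j}\widetilde{D}_{0,j}'' - 2(\widetilde{D}_{0,j}')^2\bigr)/\widetilde{D}_{0,j}^{3}$, and then citing Proposition~\ref{prop_ConvOfInfJ-Fraction_ConvOfThm} to justify that these termwise differentiations and the substitution $z = q$ commute with the infinite sum on the region $0 < |q| < 0.206783$. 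I do not expect a genuine obstacle: the whole argument is a direct specialization of an already-proved transformation formula, and the only real care needed is bookkeeping in the Leibniz expansion of the quotient --- keeping the powers of $z$ (hence of $q$ after substitution) aligned with the factorial/Stirling coefficients --- together with a sanity check that the boundary values $\gkpSII{\alpha}{0} = 0$ really do remove the spurious $i = 0$ term.
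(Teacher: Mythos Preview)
Your proposal is correct and follows essentially the same route as the paper's own proof: both specialize Proposition~\ref{prop_GFs_for_SigmaAlphan_AlphaAPosInt} to $\alpha=1,2$ and reduce everything to the quotient-rule (equivalently, Leibniz) expansions of $z\cdot D\bigl[z^{2j}/\widetilde{D}_{0,j}(q,z)\bigr]$ and $z^2\cdot D^{2}\bigl[z^{2j}/\widetilde{D}_{0,j}(q,z)\bigr]$, then evaluate at $z=q$. You are simply more explicit than the paper about the Stirling-number bookkeeping and the leading-term evaluation, but the argument is the same.
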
 
\begin{proof} 
These results follow from Proposition \ref{prop_GFs_for_SigmaAlphan_AlphaAPosInt} 
combined with two computations with the quotient rule providing that for any 
non-zero function, $G(z)$, and integers $j \geq 1$, the 
first and second derivatives of the quotient, 
$z^{2j} / G(z)$, are expanded by 
\begin{align*} 
z \cdot D\left[z^{2j} / G(z)\right] & = \frac{2j z^{2j+1}}{G(z)} - 
     \frac{z^{2j} G^{\prime}(z)}{G(z)^2} \\ 
z^2 \cdot D^2\left[z^{2j} / G(z)\right] & = \frac{2j(2j-1)z^{2j}}{G(z)} - 
     \frac{4jz^{2j+1}G^{\prime}(z)}{G(z)^2} - 
     \frac{z^{2j+2}\left(G(z)G^{\prime\prime}(z)-2G^{\prime}(z)^2\right)}{G(z)^3}. 
     \qedhere 
\end{align*} 
\end{proof} 

\section{Significance of the new results, applications, and 
         relations to recent research} 
\label{Section_Sig_and_Apps} 

\subsection{Constructions of generating functions enumerating other 
            special $q$-series} 

Our method of proof in Section \ref{Section_ProofsOfMainResults} hints at, and 
in the most general cases applies to, the 
J-fraction expansions of the most general forms of 
\eqref{eqn_J-Fraction_Expansions} for arbitrary sequences, 
$\langle \ab_i \rangle$ and $\langle c_i \rangle$. In fact, when our so-termed 
notions of the generalized Stirling $q$-coefficients in 
\eqref{eqn_Qim1Qi_ConvFnSum_Denom_Exp} of 
Corollary \ref{cor_GF_for_the_DivisorFn_dn} are replaced with the coefficients, 
$\gkpSI{h}{k}_c$, defined as in \eqref{eqn_GenStirqCoeffs_GenS1hkc} of 
Definition \ref{def_SpSums_GenStirlingqCoeffs}, then 
for any \emph{convergent} J-fraction expansion of 
\eqref{eqn_J-Fraction_Expansions} we have new sums for the $h^{th}$ convergents to 
$J_{\infty}(z)$ expanded more generally by the corresponding formula in 
\eqref{eqn_mainthm_stmt_part1} of Theorem \ref{theorem_MainGenabqz_J-FracThm}. 
Thus new infinite sums for other known J-fraction expansions, such as those 
considered in the references and in Table \ref{table_OtherqSeriesJFracExps} below, 
are formulated by an appeal to the generalized 
results for these continued fractions we have established within this article. 

\begin{table}[ht!] 
\centering\small

\begin{tabular}{||c||c|c||c||} \hline\hline
$[z^n] J_{\infty}(z)$ & $c_1$ & $c_h$ for $h \geq 2$ & $\ab_h$ \\ 
\hline\hline
$\QPochhammer{a}{q}{n}$ & $1-a$ & $q^{h-1} - a q^{h-2} \left(q^{h} + q^{h-1} - 1\right)$ & 
                          $a q^{2h-4} (a q^{h-2}-1)(q^{h-1}-1)$ \\ 
$\frac{1}{\QPochhammer{q}{q}{n}}$ & $\frac{1}{1-q}$ & 
     $\frac{q^{h-1} \left(q^{h-1} \QBinomial{h-1}{1}{q} - 
      \QBinomial{h-2}{1}{q}\right)}{ 
      \QBinomial{2h-3}{1}{q} (q^{2h-1}-1)}$ & 
     $-\frac{q^{3h-5}}{(q^{2h-3}-1)^2 (1 + q^{h-2} + q^{h-1} + q^{2h-3})}$ \\ 
$\frac{q^{\binom{n}{2}}}{\QPochhammer{q}{q}{n}}$ & $\frac{1}{1-q}$ & 
     $\frac{q^{h-2} (1-q) \left(q^h \gkpSI{h-2}{1}_q - \gkpSI{h-1}{1}_q\right)}{ 
     (1-q^{2h-3})(1-q^{2h-1})}$ & 
     $\begin{cases} 
      -\frac{1}{(1-q)^2 (1+q)}, & \text{ if $h = 2$; } \\ 
      -\frac{q^{\binom{h}{2}}}{(1-q^h)^2 (1+q^{h-2}(1+q) + q^{2h-3})}, & 
      \text{ if $h \geq 3$ } 
      \end{cases}$ \\ 
$\QPochhammer{z q^{-n}}{q}{n}$ & $\frac{q-z}{q}$ & 
     $\frac{q^h - z - qz + q^h z}{q^{2h-1}}$ & 
     $\frac{(q^{h-1}-1) (q^{h-1}-z) \cdot z}{q^{4h-5}}$ \\ 
$\frac{1}{\QPochhammer{z q^{-n}}{q}{n}}$ & $\frac{q}{q-z}$ & 
     $\frac{q^{h-1} \left(q^{2h-2} + z + q^{h-1} z - q^{h} z\right)}{ 
      (q^{2h-3}-z) (q^{2h-1}-z)}$ & 
     $\QBinomial{h-1}{1}{q} \cdot \frac{q^{3h-4}(1-q)(q^{h-2}-z) \cdot z}{ 
      (q^{2h-4}-z) (q^{2h-3}-z)^2 (q^{2h-2}-z)}$ \\ 
$\frac{(a; q)_n}{(b; q)_n}$ & $\frac{1-a}{1-b}$ & $\frac{q^{i-2}\left(q+ab q^{2i-3} + a\left(1-q^{i-1}-q^i\right) 
     + b \left(-1-q+q^{i}\right)\right)}{ 
     (1-b q^{2i-4}) (1-b q^{2i-2})}$ & $\frac{q^{2i-4} (1-bq^{i-3}) (1-a q^{i-2}) (a - b q^{i-2}) 
     (1-q^{i-1})}{(1-b q^{2i-5}) (1-b q^{2i-4})^2 (1-b q^{2i-3})}$ \\ 
\hline\hline 
\end{tabular} 

\bigskip 
\caption{J-fraction parameters generating the terms in special $q$-series expansions} 
\label{table_OtherqSeriesJFracExps} 

\end{table} 

Examples of other notable $q$-series expansions derived from the generalized 
Jacobi-type J-fractions defined in Section \ref{subSection_JFracExps_of_OGFs} 
are summarized in Table \ref{table_OtherqSeriesJFracExps} on page 
\pageref{table_OtherqSeriesJFracExps} \cite[\cf \S 4]{SQSERIES-CFRACS}. 
We can use these continued fraction expansions combined with the definition of the 
more general Stirling $q$-coefficients in \eqref{eqn_GenStirqCoeffs_GenS1hkc} and 
Lemma \ref{lemma_ExactExpsOfThe_ConvDenomFns} to generate new $q$-series 
expansions for 
$q$-exponential functions, $q$-trigonometric functions, infinite $q$-Pochhammer 
symbol products and their reciprocals, and related series such as for the 
Rogers-Ramanujan continued fractions \citep{BERNDT-QSERIES} \citep[\S 17.3]{NISTHB}. 
Generalized forms of the Stirling numbers of the first kind in the context of the 
product-based definitions we used to define our notions of the 
Stirling $q$-coefficients in the previous section are considered in the references 
\citep{MANSOUR-GENSTIRNUMS,JNT-JACOBI-STIRLINGNUMBERS}. 
Other typical $q$-analogs to the Stirling numbers, or $q$-Stirling numbers, are 
defined in \citep{CHARALAM-GENQ-STIRLINGNUMS,FSTIRNUMS} and in the 
particular form of 
\citep[\S 17.3]{NISTHB} 
\begin{align*} 
a_{m,s}(q) & = \frac{(1-q)^s}{q^{\binom{s}{2}} (q; q)_s} \times \sum_{j=0}^s 
     \gkpSI{s}{j}_q \frac{(-1)^j q^{\binom{j}{2}} (1-q^{s-j})^m}{(1-q)^m}. 
\end{align*} 
Charalambides defines a $q$-analog to the Stirling numbers of the first kind by the 
products 
\begin{align*} 
([t]_q-[r]_q)([t]_q-[r+1]_q) \cdots ([t]_q-[r+n-1]_q) & = 
     \sum_{k=0}^n s_q(n, k; r) [t]_q^k, 
\end{align*} 
where $[r]_q := (1-q^x) / (1-q)$ denotes a \emph{$q$-real number}, 
$s_q(n, k; r)$ corresponds to a modified form of 
\eqref{eqn_GenStirqCoeffs_GenS1hkc} with $c_h := [n+r-1]_q$, 
and where we have an explicit finite sum expanded in terms of the 
$q$-binomial coefficients by the formula 
\begin{align*} 
s_q(n, k; r) & = \frac{1}{(1-q)^{n-k}} \sum_{j=k}^n \gkpSI{n}{j}_q \binom{j}{k} 
     (-1)^{j-k} q^{\binom{n-j}{2}+r(n-j)}. 
\end{align*} 
Additionally, we can extend the generating function results for the 
J-fraction expansions defined by Definition \ref{def_ParamSeqDefs_and_Notations} to 
obtain new identities and generating function expansions of series involving 
ratios of two $q$-Pochhammer symbols. For example, we have the following 
$q$-series identities which are either direct applications of the new 
$q$-Pochhammer ratio series or that form special cases of our new results 
corresponding to other special $q$-series expansions where 
$(a; q)_{-n} = \prod_{k=1}^n (1-a / q^k)^{-1} = (a/q; 1/q)_n^{-1}$ 
\citep{BERNDT-QSERIES}: 
\begin{align*} 
\sum_{n=-\infty}^{\infty} \frac{(a; q)_n}{(b; q)_n} z^n & = 
     \frac{(az; q)_{\infty} (q / (az); q)_{\infty} (q; q)_{\infty} (b/a; q)_{\infty}}{ 
     (z; q)_{\infty} (b / (az); q)_{\infty} (b; a)_{\infty} (q/a; q)_{\infty}},\ 
     \left| \frac{b}{a} \right| < |z| < 1, |q| < 1 \\ 
\sum_{n \geq 0} \frac{(a; q)_n}{(q; q)_n} z^n & = 
     \frac{(az; q)_{\infty}}{(z; q)_{\infty}},\ |z|, |q| < 1 \\ 
     & = 
     \sum_{n \geq 0} \frac{(a; q)_n (az; q)_n (qz)^n q^{n(n-1)} (1-az q^{2n})}{ 
     (q; q)_n (z; q)_{n+1}} \\ 
\sum_{n \geq 0} \frac{q^{n^2} z^n}{(q; q)_n} & = 
     \sum_{n \geq 0} \frac{(z; q^2)_n}{(zq; q^2)_n} z^n. 
\end{align*} 
Special cases of the last identity and the entries for the J-fraction expansions 
given in Table \ref{table_OtherqSeriesJFracExps} 
similarly allow us to generate the infinite $q$-Pochhammer symbol product, 
$(z; q)_{\infty}$, and its reciprocal, which provides immediate applications to 
generating functions and rational approximate generating functions 
for partition functions. 

\subsection{Relations to recent research and some open problems} 

Recent work on the divisor function and the sum-of-divisors function using 
results obtained from Lambert series identities and the Lambert series 
generating functions for the generalized sum-of-divisors functions defined by 
\begin{align*} 
L_{\alpha}(q) & := \sum_{n \geq 1} \frac{n^{\alpha} q^n}{1-q^n} = 
     \sum_{m \geq 1} \sigma_{\alpha}(m) q^m, 
\end{align*} 
are considered in the references 
\citep{JNT-COMBINTERPRET-DIVFNS,JNT-NEWCVLSDN,JNT-ANEWLOOK,SEGLADUN-GFS}. 
The applications of these results are related to divisor sum convolutions, 
combinatorial interpretations of the explicit values of the divisor function, 
$d(n) \equiv \sigma_0(n)$, and of course finding new generating functions for the 
two classical divisor functions, $d(n)$ and $\sigma(n)$, and the 
generalized sum-of-divisors functions, $\sigma_{\alpha}(n)$. 
Our new results proved within the article provide new $q$-series expansions of the 
generating functions for the divisor functions and the generalized 
sum-of-divisors functions whose $h$-order accurate $h^{th}$ convergents are 
rational for each finite $h \geq 2$ 
(\cf equation \eqref{footnote_ApproximateRationalGFExamples} 
on page \pageref{footnote_ApproximateRationalGFExamples} below). 
Recent results on continued fraction expansions similar to the J-fraction 
expansions defined by Definition \ref{def_ParamSeqDefs_and_Notations} and for 
other special $q$-series expansions are considered in 
\citep{QBC-CFRACS,FLOWERS-CFRACS-LSERIES}. 

If we set $q \mapsto 1^{-}$ in our expansions for the $q$-Pochhammer ratios from 
Definition \ref{def_ParamSeqDefs_and_Notations}, we obtain a special case of the 
generalized hypergeometric function, $_1F_1(a; b; z)$. 
Several new results proved in \citep{LAMBERT-SERIES-NEAR} 
similarly provide asymptotic formulas for the 
sum-of-divisors generating functions near $q = 1$. 
In particular, we may define the 
Lambert series, $\mathcal{L}_q(s, x)$, where $\mathcal{L}_q(s, 1)$ 
corresponds to the ordinary generating function for $\sigma_s(n)$, as 
\begin{align*} 
\mathcal{L}_q(s, x) & := \sum_{n \geq 1} \frac{n^s q^{nx}}{1-q^n} = 
     \frac{\Gamma(s+1) \zeta(s+1, x)}{\left(\log \frac{1}{q}\right)^{s+1}} - 
     \sum_{n \geq 0} \frac{\zeta(1-s-n) B_n(x) (\log q)^{n-1}}{n!}, 
\end{align*} 
where $\zeta(s, a)$ denotes the \emph{Hurwitz zeta function} and 
$B_n(x)$ is a \emph{Bernoulli polynomial} \citep[\cf \S 24.2, \S 25.11]{NISTHB}. 
For comparison, we may use the transformation identity in 
\eqref{eqn_nPowmTimesSeqfn_OGF_transform_stmt_v1} and an expansion of the 
exponential generating function for the \emph{Bernoulli numbers} in powers of $n$ 
to obtain that
\begin{align*} 
\sum_{n \geq 1} \frac{n^{\alpha} q^n}{1-q^n} & = 
     -(\log q) \times \sum_{n \geq 1} n^{\alpha-1} q^n - 
     \sum_{k \geq 0} \frac{(-1)^k B_k (\log q)^{k-1}}{k!} \times 
     \sum_{j=0}^{k+\alpha} \gkpSII{k+\alpha}{j} \frac{q^j \cdot j!}{(1-q)^{j+1}}. 
\end{align*} 
Relations of our new results to other open problems in number theory include the 
famous unresolved questions of whether there are infinitely-many perfect numbers, 
$n$ for which $\sigma(n) = 2n$, and whether there are any odd perfect numbers. 
It is beyond the scope of this article to give due attention to these famous 
unresolved problems, but we note that topics related to perfect numbers and their 
generalizations are an active research area for many 
researchers and mathematicians. 

\subsection{Generating functions and asymptotics} 

This section suggests several approaches to problems related to the 
generalized sum-of-divisors functions following as applications 
from our specific technique of an approach to these special function generating 
functions through J-fractions. 
First, we note that we may apply any number of known methods for obtaining 
asymptotic estimates of the coefficients in a formal power series to find new 
forms of asymptotic formulas for the generalized sum-of-divisors functions and the 
partial sums, $\Sigma_{\alpha}(x) := \sum_{n \leq x} \sigma_{\alpha}(n)$, 
by considering the generating functions, $\Conv_h(q, z)$ and $J_{\infty}(q, z)$, 
multiplied by a factor of $1 / (1-q)$. 
This consequence of using a generating-function-based technique to approaching the 
generalized divisor functions is particularly relevant since we have 
$h$-order accurate power series approximations to the generating functions of 
these special divisor functions which are rational in $q$ (and in $z$) for each 
finite case of $h \geq 2$. 
We compare the forms of these rational approximate generating functions to the 
known examples cited in Section \ref{subSection_ExamplsOfKnownGFs}. 

\subsection{Rational generating functions and new congruence results} 

One other consequence of utilizing the technique of using J-fraction expansions to 
generate the terms in our special Lambert series and $q$-series forms provides 
congruences for these functions modulo special functions of $q$. 
More precisely, when $h \geq 3$ for any divisor, 
$\widehat{d}(q)$ of the $h^{th}$ modulus, 
$M_h(a, b; q) := \lambda_1(a, b; q) \cdots \lambda_h(a, b; q)$, 
defined by \eqref{def_ParamSeqDefs_and_Notations} in 
Definition \ref{def_ParamSeqDefs_and_Notations}, we have that for $n < 2h$ 
\citep[\S 2]{FLAJOLET82} 
\begin{align*} 
J_{\infty}(a, b; q, z) & \equiv \Conv_h(a, b; q, z) && \pmod{M_h(a, b; q)} \\ 
     & \equiv \Conv_h(a, b; q, z) && \pmod{\widehat{d}(q)} \\ 
\frac{(a; q)_n}{(b; q)_n} & \equiv 
     [q^n] \Conv_h(a, b; q, z) && \pmod{q^{2h}}, 
\end{align*} 
and similarly for the special case series when $(a, b) := (q, q^2)$ and 
$n, x < 2h$ we have that 
\begin{align*} 
d(n) & \equiv 
     [q^n] \frac{\Conv_h(q, q^2; q, q)}{1-q} && \pmod{q^{2h}} \\ 
\Sigma_0(x) & = [q^x] \frac{\Conv_h(q, q^2; q, q)}{(1-q)^2} && \pmod{q^{2h}}. 
\end{align*} 
By considering the approximate generating functions, 
$\Conv_h(q, q^2; q, q) / (1-q)$, modulo $q^{2h}$ we are able to obtain 
simple rational functions in $q$ which are 
$(2h-1)$-order accurate in generating the divisor function: 
\begin{align} 
\label{footnote_ApproximateRationalGFExamples} 
\frac{1+4 q+8 q^2+11 q^3+10 q^4}{1+2 q+2 q^2-2 q^4} & = 
     1 + \sum_{n=1}^4 d(n) q^n + O(q^5) \\ 
\notag 
\frac{-1-5 q-14 q^2-29 q^3-46 q^4-62 q^5-71 q^6}{-1-3 q-6 q^2-8 q^3-7 q^4-4 q^5+q^6} & = 
1 + \sum_{n=1}^6 d(n) q^n + O(q^7) \\ 
\notag 
\textstyle{\frac{1+6 q+20 q^2+50 q^3+101 q^4+175 q^5+267 q^6+369 q^7+472 q^8}{1+4 q+10 q^2+19 q^3+29 q^4+37 q^5+40 q^6+38 q^7+32 q^8}} & = 
1 + \sum_{n=1}^7 d(n) q^n + O(q^8). 
\end{align} 
We can similarly form the $2h$-order accurate rational approximate generating 
functions for the sum-of-divisors function, $\sigma(n)$, by 
differentiating $qz \cdot \Conv_h(q, qz) / (1-q)$ with respect to $z$ 
which then implies the following analogous results: 
\begin{align*} 
\frac{q \left(1+3 q+3 q^2\right)}{(1-q) (1+q)} & = 
     \sum_{n=1}^3 \sigma(n) q^n + O(q^4) \\ 
\frac{q \left(1+7 q+25 q^2+62 q^3+115 q^4\right)}{\left(1+q+3 q^2\right) \left(1+3 q+3 q^2\right)} & = 
          \sum_{n=1}^5 \sigma(n) q^n + O(q^6) \\ 
\frac{q \left(1+9 q+44 q^2+155 q^3+430 q^4+998 q^5+2000 q^6\right)}{1+6 q+22 q^2+58 q^3+120 q^4+204 q^5+290 q^6+350 q^7} & = 
          \sum_{n=1}^7 \sigma(n) q^n + O(q^8) \\ 
\textstyle{\frac{q \left(1+11 q+65 q^2+276 q^3+935 q^4+2676 q^5+6696 q^6+14998 q^7+30592 q^8\right)}{1+8 q+37 q^2+126 q^3+347 q^4+812 q^5+1664 q^6+3050 q^7+5079 q^8+7776 q^9}} & = 
          \sum_{n=1}^9 \sigma(n) q^n + O(q^{10}). 
\end{align*} 
We can also reduce the coefficients of the numerator and denominator polynomials in 
$q$ in these rational generating function approximations modulo any integer 
$p \geq 2$ to form congruences for the sums-of-divisors functions modulo $p$. 
For example, when $h := 4, 5$ and $p := 5$, and when $1 \leq n < 2h$, we have that 
\begin{align*} 
\frac{q+4 q^2+4 q^3+3 q^6}{1+q+2 q^2+3 q^3+4 q^5} & \equiv 
     \sum_{n=1}^7 \left[\sigma(n)\pmod{5}\right] q^n + O(q^8) \\ 
\frac{q+q^2+q^4+q^6+q^7+3 q^8+2 q^9}{1+3 q+2 q^2+q^3+2 q^4+2 q^5+4 q^6+4 q^8+q^9} & \equiv 
     \sum_{n=1}^9 \left[\sigma(n)\pmod{5}\right] q^n + O(q^{10}) 
\end{align*} 
The identities in the previous several equations can be generalized to enumerate 
$\sigma_{\alpha}(n)$ and $\Sigma_{\alpha}(x)$ for $\alpha \in \mathbb{Z}^{+}$ 
by first differentiating with respect to $z$ and then setting $z \mapsto q$ 
as in the generating function constructions from 
Section \ref{subSection_ModifiedGFs_for_IntOrderDivFns}. 

\section{Conclusions} 
\label{Section_Concl} 

\subsection{Summary} 

We have defined the forms of infinite J-fractions in the form of 
\eqref{eqn_J-Fraction_Expansions} 
whose power series expansions in $z$ generate the ratio of 
$q$-Pochhammer symbols, $(a; q)_n / (b; q)_n$, for all $n \geq 0$. 
We focused on the special case of these expansions where $(a, b) := (q, q^2)$, and 
subsequently proved the forms of new convergent infinite $q$-series sums 
involving the generalized Stirling $q$-coefficients we defined in 
\eqref{eqn_GenStirqCoeffs_GenS1hkc} that enumerate the divisor function, 
$d(n)$, and the generalized sums-of-divisors functions, 
$\sigma_{\alpha}(n)$ for $\alpha \in \mathbb{Z}^{+}$. 
We cite comparisons of these results generating the divisor function and the positive
integer-order sums-of-divisors functions 
according to the expansions of \eqref{eqn_Qim1Qi_ConvFnSum_Denom_Exp} with the 
known generating function expansions related to these special functions 
cited in Section \ref{subSection_ExamplsOfKnownGFs} of the introduction. 
Further applications of the results we have proved in this article are derived 
from the subtlety that most of the expansions given for the convergent 
denominator functions, $Q_h(z)$, involved in the infinite sums for these 
generalized J-fractions are stated and proved in the general case of the 
implicit sequences in \eqref{eqn_J-Fraction_Expansions}. 

\renewcommand{\refname}{References} 

\nocite{HARDYWRIGHTNUMT}

\end{document}